%

\documentclass[a4paper]{amsart}

%

\usepackage{amsmath}
\usepackage{xspace}
\usepackage[psamsfonts]{amssymb}
\usepackage[utf8]{inputenc}
\usepackage{float}\restylefloat{figure}
\usepackage{color}
\usepackage{tikz}\usetikzlibrary{arrows}
\graphicspath{{img/}}

\usepackage{hyperref}

\theoremstyle{plain}
\newtheorem{definition}{Definition}
\newtheorem{proposition}{Proposition}
\newtheorem{lemma}{Lemma}
\newtheorem{corollary}{Corollary}
\newtheorem{theorem}{Theorem}
\newtheorem{prelemma}{Pre Lemma}
\theoremstyle{remark}
\newtheorem*{remark}{Remark}
\newtheorem*{example}{Example}

\renewcommand{\epsilon}{\varepsilon}

\newcommand{\cal}[1]{\mathcal{#1}}
\newcommand{\ov}[1]{\overline{#1}}
\newcommand{\setof}[2]{\big\{{#1}\,\big|\,{#2}\big\}}

\def\bEA{\begin{eqnarray*}}
\def\eEA{\end{eqnarray*}}
\def\bEAn{\begin{eqnarray}}
\def\eEAn{\end{eqnarray}}

\def\cal{\mathcal}
\def\ov{\overline}

\def\tend{\longrightarrow}

\def\wh{\widehat}
\def\on{\operatorname}

\def\cal{\mathcal}

\def\C{{\mathbb C}}
\def\D{{\mathbb D}}
\def\H{{\mathbb H}}
\def\I{{\mathbb I}}
\def\N{{\mathbb N}}

\def\R{{\mathbb R}}

\def\Z{{\mathbb Z}}

\def\Re{{\on{Re}\,}}
\def\Im{{\on{Im}\,}}

\newcommand{\sq}{\on{Sq}}

\begin{document}

\email{arnaud.cheritat@math.univ-toulouse.fr}

\address{CNRS / Institut de mathématiques de Toulouse\\ Université Paul Sabatier\\ 118 Route de Narbonne\\ Toulouse, France}

\title[Proof of a convergence]{Proof of a convergence of affine Riemann surfaces to a richer one}

\begin{abstract}
We give a proof of a phenomenon conjectured in our former article: ``Beltrami forms, affine surfaces and the Schwarz-Christoffel formula: a worked out example of straightening''. We also start an abstract discussion of the notion of limits of Riemann surfaces.
\end{abstract}

\author{Arnaud Chéritat}

\maketitle

Reminder: a topological space satisfies \emph{Hausdorff's separation axiom} (also called $T_2$ axiom) if every pair of distinct points has a pair of disjoint neighborhoods.

\section*{Prologue}

In the preprint \cite{C}, the author studied a curious enrichment phenomenon. Though it is motivated by the study of a particular case of the Beltrami equation, it was reformulated there in terms of uniformization of a Riemann surface depending on a parameter $K$. This surface was defined by gluing polygonal pieces along their boundaries\footnote{The gluings extend uniquely as holomorphic maps defined in a neighborhood of the segments or lines.}, vertices excluded, by complex-affine maps, i.e.\ maps of the form $z\mapsto az+b$.
There were two pieces: a rectangle and the complement of a square in $\C$. The ratio $K\geq 1$ of the lengths of the sides of the rectangle is the parameter. See Section~\ref{subsec:fam} for illustrations and more details.
It can be uniformized, as a Riemann surface, to the complex plane minus four points, corresponding to the four corners of the square. We gave in \cite{C} an integral formula to express the inverse of this uniformization, i.e.\ mapping back to the polygonal pieces. This formula is nothing but Schwarz-Christoffel's formula applied to a more general setting than the usual one. The image of the rectangle is a deformed square.
When the rectangle gets more and more flat, computer experiments showed that its (normalized) image tends to a non-trivial limit, with some enrichments\footnote{Reminiscent of parabolic enrichment in complex dynamics.}.

Since the change of coordinates are complex-affine, what we defined is not only a Riemann surface but is somewhat more rigid. Let us call affine Riemann surface this kind of manifold, with changes of charts that are complex-affine. In \cite{C} we defined another affine Riemann surface by gluing infinitely many polygonal pieces, that we presented as a candidate for being a natural limit. In particular, we stated the conjecture that it is conformally equivalent to $\C$ minus two points, and that these two points union the image of some specific pieces is equal to the Hausdorff limit as $K\tend+\infty$ of the image of the rectangle, see Figure~\ref{fig:rects}.

\begin{figure}
\begin{tikzpicture}
\node at (-1.65,0) {\includegraphics[scale=0.25,trim=100 150 100 150]{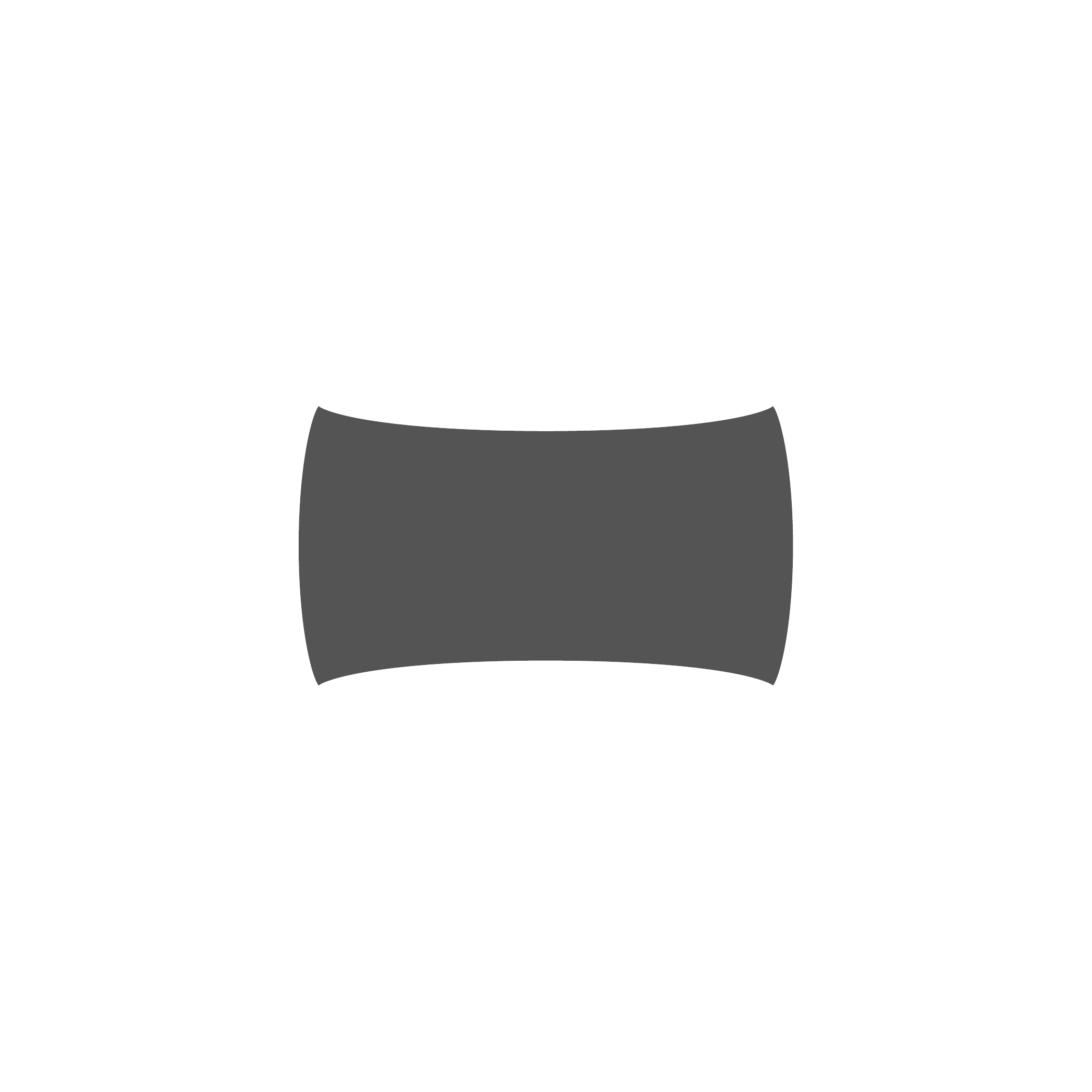}};
\node at (2.2,0) {\includegraphics[scale=0.25,trim=100 150 100 150]{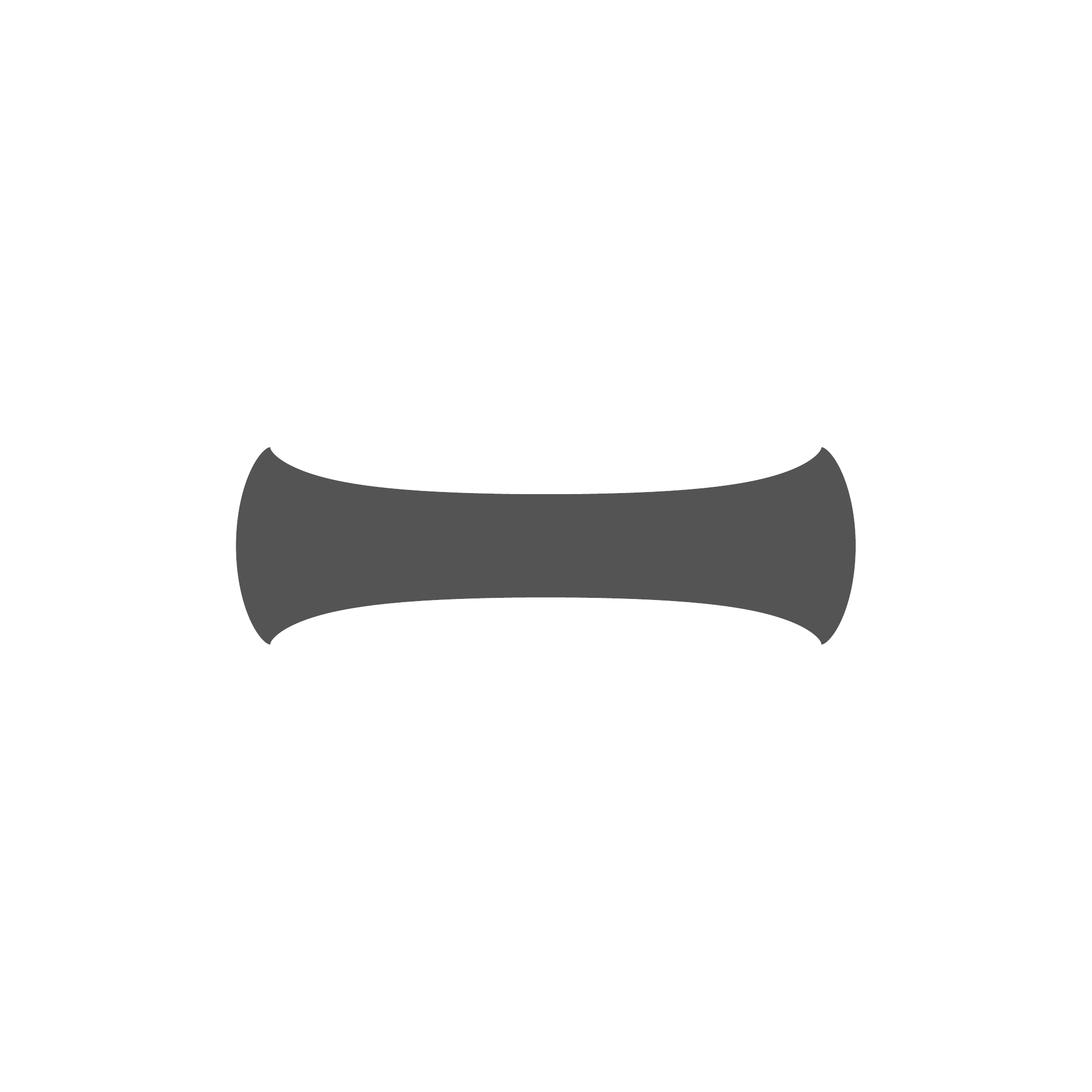}};
\node at (6.5,0) {\includegraphics[scale=0.25,trim=80 250 80 250]{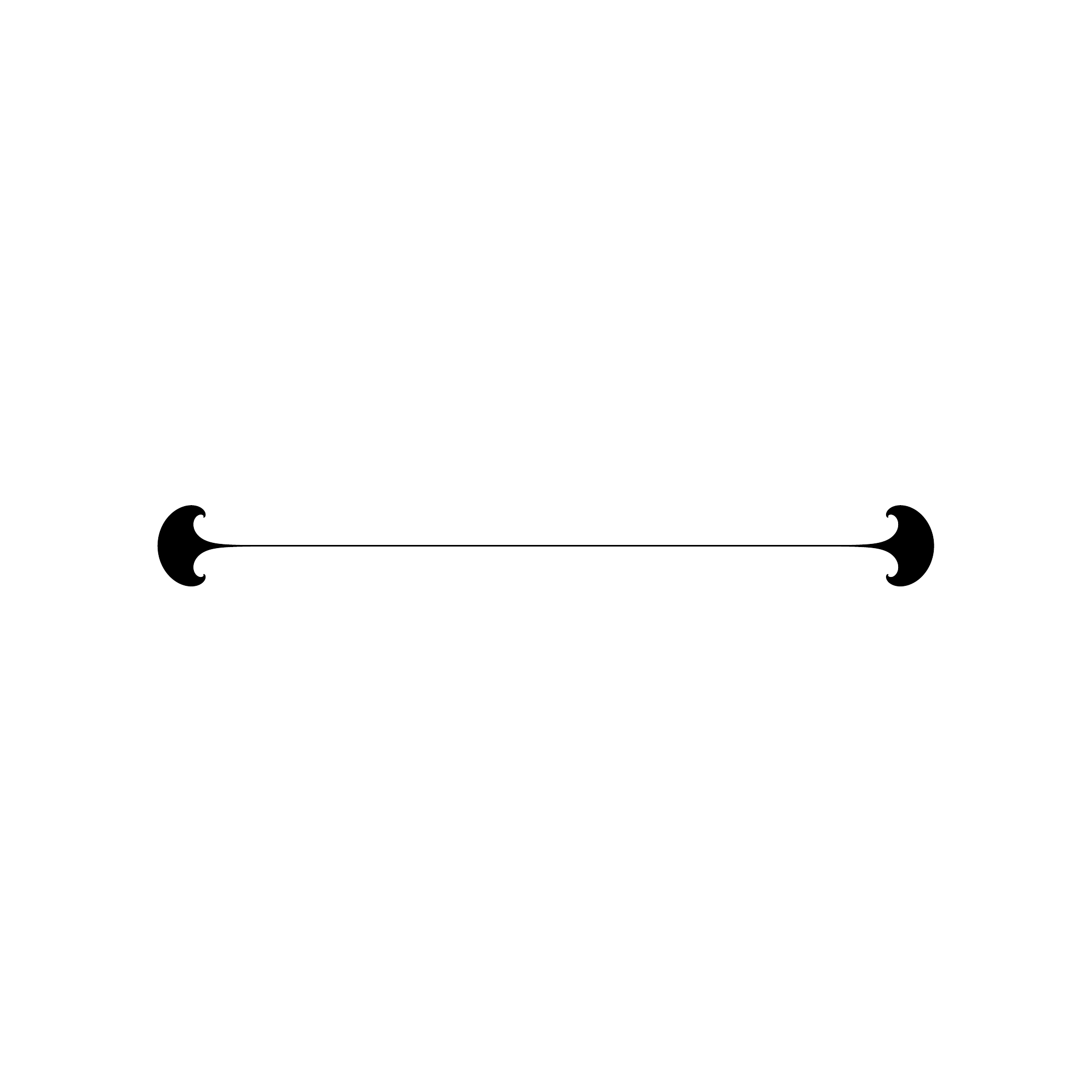}};
\node at (2.8,-1.8) {\includegraphics[width=11.5cm,trim=20 25 20 25]{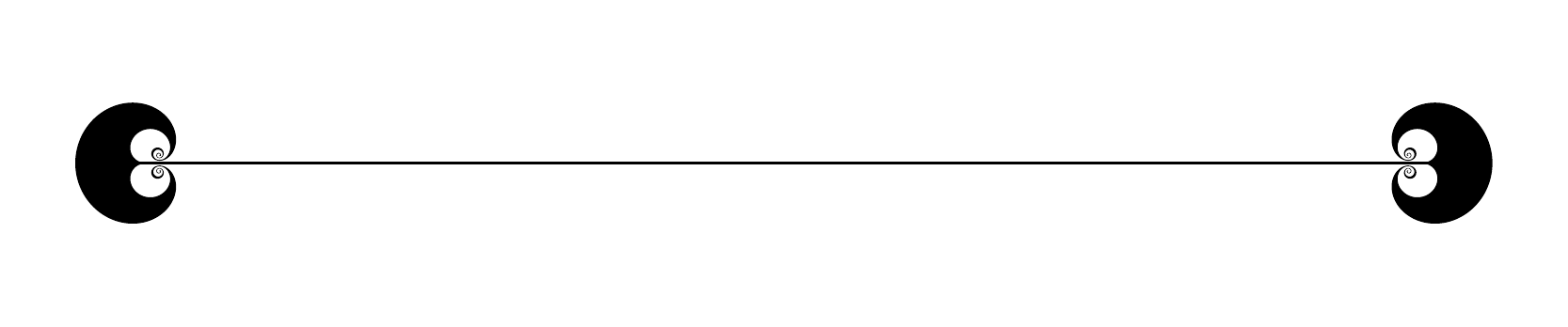}};
\node at (0.2,-5) {\includegraphics[width=6cm]{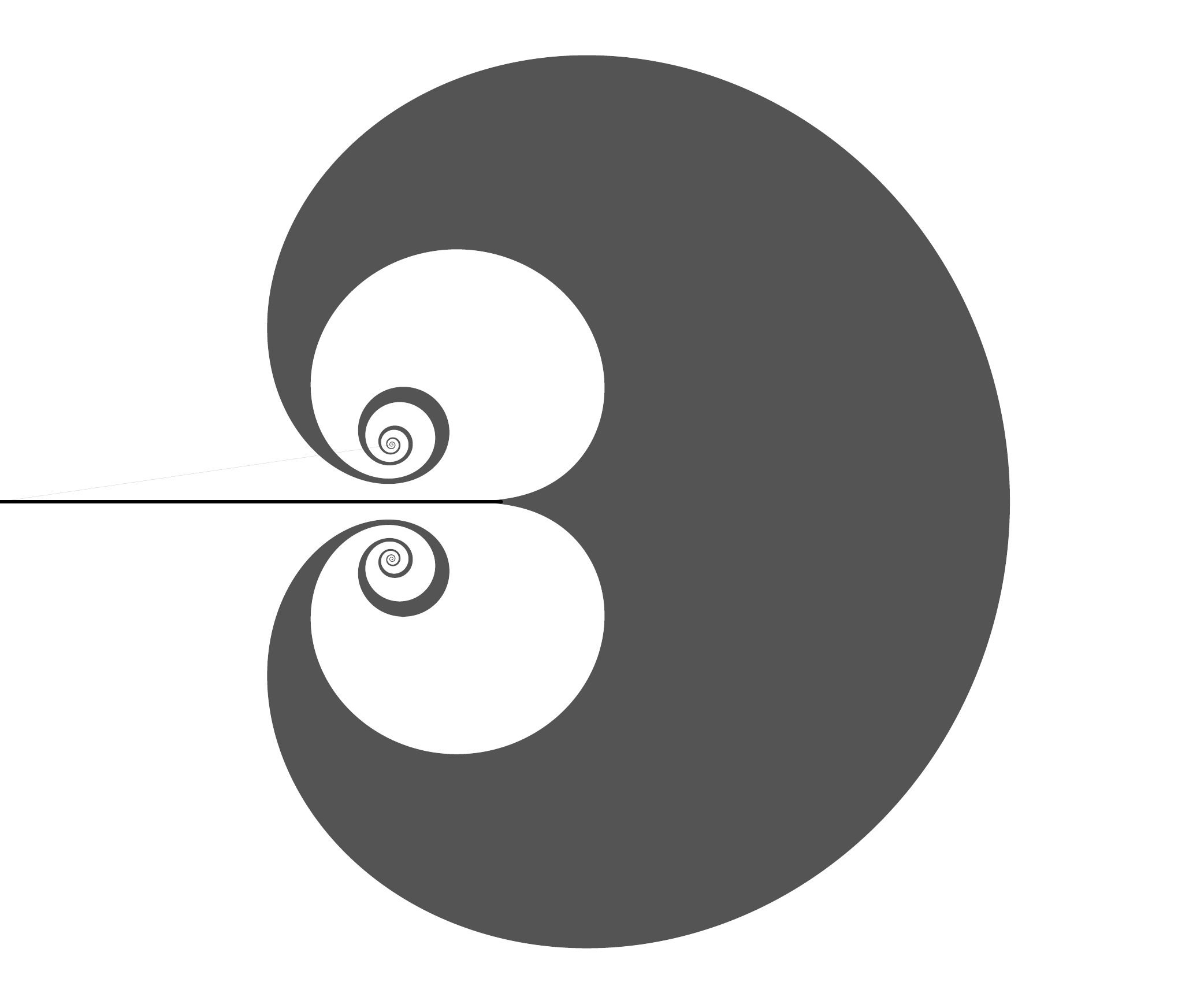}}; 
\node at (6,-5) {\includegraphics[width=6.45cm]{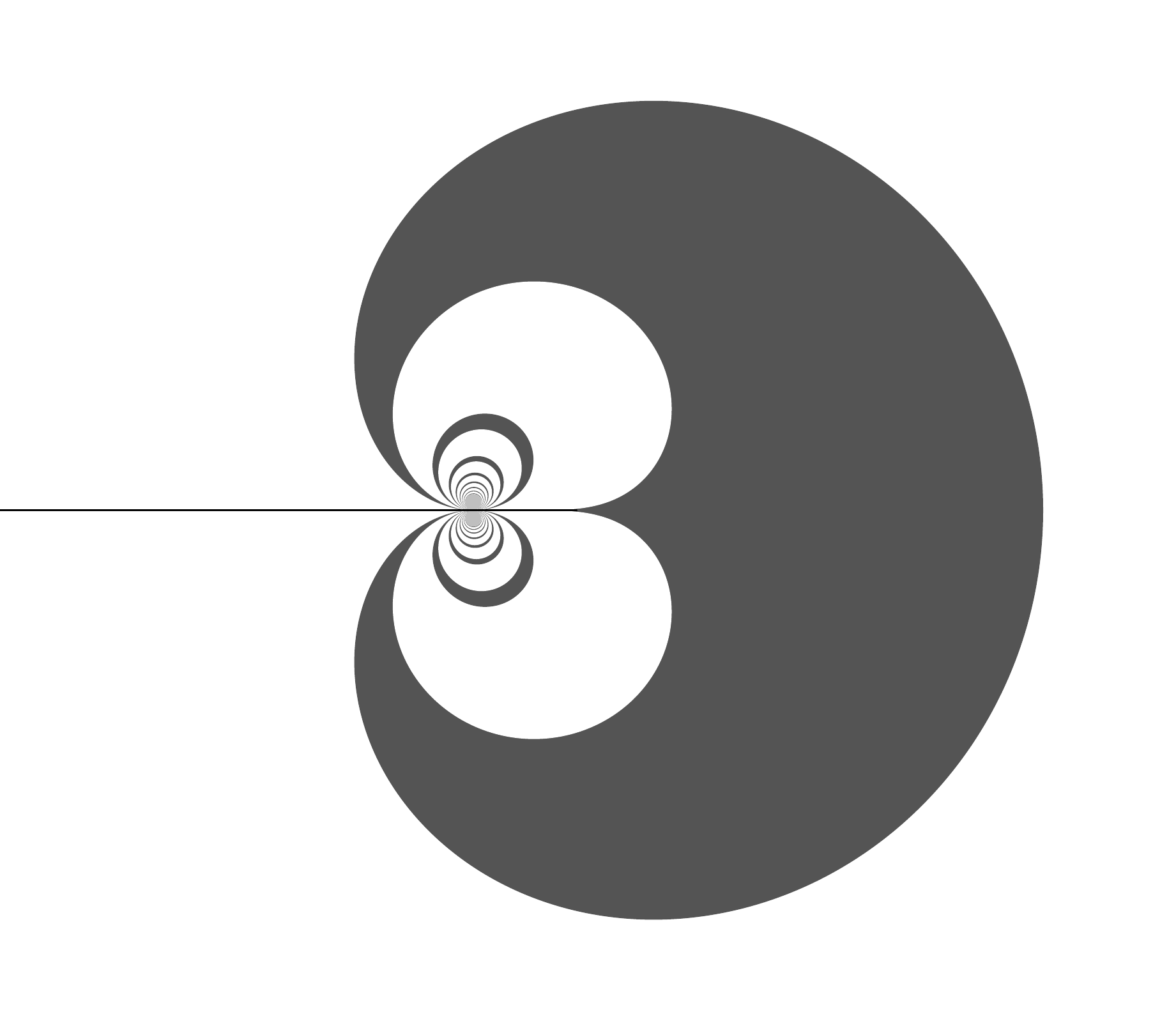}}; 
\end{tikzpicture}
\caption{Top: image of the rectangle for $K=2$, $K=5$, $K=1000$ and $K=10^{20}$. Bottom: zoom on the right bulb of the image of the rectangle for $K=10^{20}$, and conjectured limit for $K\tend +\infty$. Some of them are shown in gray instead of black to save ink in case this page gets printed.}
\label{fig:rects}
\end{figure}

To an affine Riemann surface, we can associate a Riemann surface by using exactly the same atlas. Now one may want to go in the opposite way. So start from a Riemann surface. It turns out that the data of a structure of affine Riemann surface that is compatible with the given Riemann surface, is equivalent to the data of a holomorphic Christoffel symbol (a connection) in complex dimension one. More practically stated, the expression of the symbol in a Riemann surface chart takes the form of a holomorphic function $\zeta(z)$ and the affine Riemann surface charts $\phi$ expressed in this chart are precisely the solutions of $\phi''/\phi'= \zeta$. Expressed in the normalized global chart $\C\setminus\{\text{a finite set}\}$ conformally equivalent to the affine Riemann surfaces mentioned above, the symbol turns out to be very explicit:
\[\zeta(z)=\frac{\log K}{2\pi i} \left(\frac{-1}{z-z_1}+\frac{1}{z-z_2}+\frac{-1}{z-z_3}+\frac{1}{z-z_4}\right)\]
where the $z_i$ depend on $K>1$, which is the ratio between the lengths of the long side of the rectangle and the short side.
We were able to prove in \cite{C} that when $K\tend+\infty$, then $z_1$, $z_4$ tend to a positive real $x_\infty$, $z_2$, $z_3$ to $-x_\infty$, and that the symbol tends to
\[\zeta(z) = \frac{-\tau}{(z-x_0)^2}+\frac{\tau}{(z+x_0)^2}\]
where $\tau>0$ is some constant.
It is natural to conjecture that our candidate limit affine Riemann surface is isomorphic the affine Riemann surface defined over $\C\setminus\{x_\infty,-x_\infty\}$ by this function $\zeta$.

In the present article we define a notion of being a limit and prove the conjectures stated above. In doing so, we also give a proof of the convergence of $z_i$ and $\zeta$ that is independent of the one given in \cite{C}. We added a section with a few abstract generalities about limits of manifolds, and an illustration with Riemann surfaces.

It is striking that the natural limit of a sequence of affine surfaces that can be defined by using a bounded finite number of polygonal pieces\footnote{One piece is engouh if we allow for slits with two distinct sides.}, may require infinitely many polygonal pieces.

\section{Matter}

\subsection{Embeddings}

Let $\cal M_\infty$ and $\cal M_n$ be a sequence of Riemann surfaces or of affine Riemann surfaces of some kind. 
The charts of $\cal M_n$ map some open subsets of $\cal M_n$ homeomorphically to open subsets of $\C$, so that the changes of charts belong to a set of maps between open subsets of $\C$, called the set of legal changes: holomorphic maps for Riemann surfaces, orientation preserving similitudes for affine Riemann surfaces. Let
\[  \I=\{0\}\cup\setof{1/n}{n>0}
.\]
We will consider the set $\I\times\C$ on which we put the topology induced by its inclusion in $\R^3$. The leaf number $n$ is the subset
\[L_n=\{1/n\}\times \C \]
(by convention $1/\infty=0$).

\begin{definition}\label{def:1}
An \emph{embedding of $\cal M_\infty$ at the limit} of the sequence $\cal M_n$ is a topology and an atlas over a subset $\cal U$ of the disjoint union $\cal M=\cal M_\infty\coprod \cal M_1\coprod\cal M_2\coprod\cdots$, such that $\cal U$ contains $\cal M_\infty$,
such that the topology satisfies Hausdorff's separation axiom,
such that the charts are homeomorphisms from open subsets of $\cal M$ to open subsets of $\I\times\C$, with $\cal M_n$ being mapped to the leaf $L_n$ for all $n\in\{\infty,1,2,\ldots\}$ by a map compatible with the original atlas of $\cal M_n$, and
such that the coordinate changes of the new atlas are continuous (as maps between open subsets of $\I\times\C$). 
\end{definition}
It is understood that the atlas covers $\cal U$. As the charts are leafwise compatible with $\cal M_n$, it follows that the coordinate changes leafwise belong to the set of legal changes. The continuity condition for the coordinate changes means that for any pair of charts on $\cal M$, the coordinate change on leaf $L_n$ converges to the coordinate change on leaf $L_\infty$ as $n\tend +\infty$.

For limits when $t\to+\infty$ of a family parameterized by a real $t>1$, replace in the definition the set $\I$ by $[0,1[$ (or $[0,1]$) and map $\cal M_t$ to leaf $1/t$. The continuity condition is then stronger. It can be relaxed, if necessary, to continuity of the coordinate change at every point of $L_0$ only (for the discrete case, this makes no difference).

\subsection{The family and its candidate limit}\label{subsec:fam}

We now recall the definition of the affine Riemann surfaces $\cal A_K$ and $\cal A_\infty$. They are defined by gluing polygonal patches, subsets of $\C$, along neighborhoods of their boundaries by affine maps. 	
By an abuse of language, we will often use the same name or symbol to denote the subset of $\C$ and the corresponding subset of the surface.

\begin{figure}
\begin{tikzpicture}[x=1pt,y=1pt,>=angle 90]
\node at (142,82) {\includegraphics{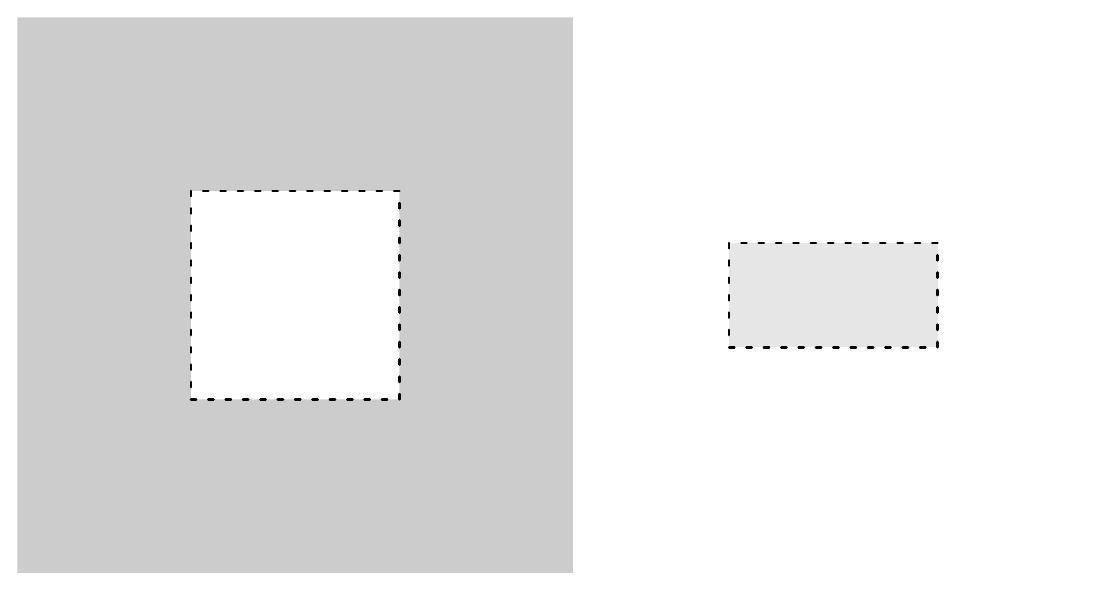}};
\node at (71,15) {Chart 1};
\node at (225,82) {Chart 2};
\draw (70,51) edge[out=-35,in=225,<-] (225,67);
\node at (150,145) {$z+(i/2)$};
\draw (70,113) edge[out=35,in=-225,<-] (225,97);
\node at (150,20) {$z-(i/2)$};
\draw (40,87) edge[out=15,in=-205,<-] (195,87);
\node at (170,110) {$2z+1$};
\draw (100,77) edge[out=-25,in=215,<-] (255,77);
\node at (130,55) {$2z-1$};
\end{tikzpicture}
\caption{The definition of $\cal A_K$, illustrated for $K=2$.}
\label{fig:glue}
\end{figure}

Let Chart~1 denote the complement in $\cal \C$ of the closed square $\sq=[1,1]\times[1,1]$.\footnote{We identify $\C$ with $\R^2$.}
Let Chart~2 denote the open rectangle $]-1,1[\times ]-\frac1K,\frac1K[ \subset\C$.
To each segment in the boundary of one chart, upper, lower, left or right, we associate the segment in the boundary of the other chart with the same adjective: upper, lower, left or right. The same can be done for corners.
Now glue the closure of Chart~1 to the closure of Chart~2 by identifying each pair of corresponding segments with the unique similitude respecting the position of corresponding corners. See Figure~\ref{fig:glue}. We then get a topological surface homeomorphic to the plane. Since the gluing were similitudes, if one removes the four corners, it is possible to define an atlas of affine Riemann surface for which Chart~1 and Chart~2 are charts. (For instance, use a small enough neighborhood of the closure of Chart~1 minus the corners, by adding tabs to the segments, do the same for Chart~2; the coordinate changes will be the same set of $4$ similutudes.)

\begin{figure}[htbp]%
\begin{picture}(350,290)
\put(0,0){\includegraphics[width=350pt]{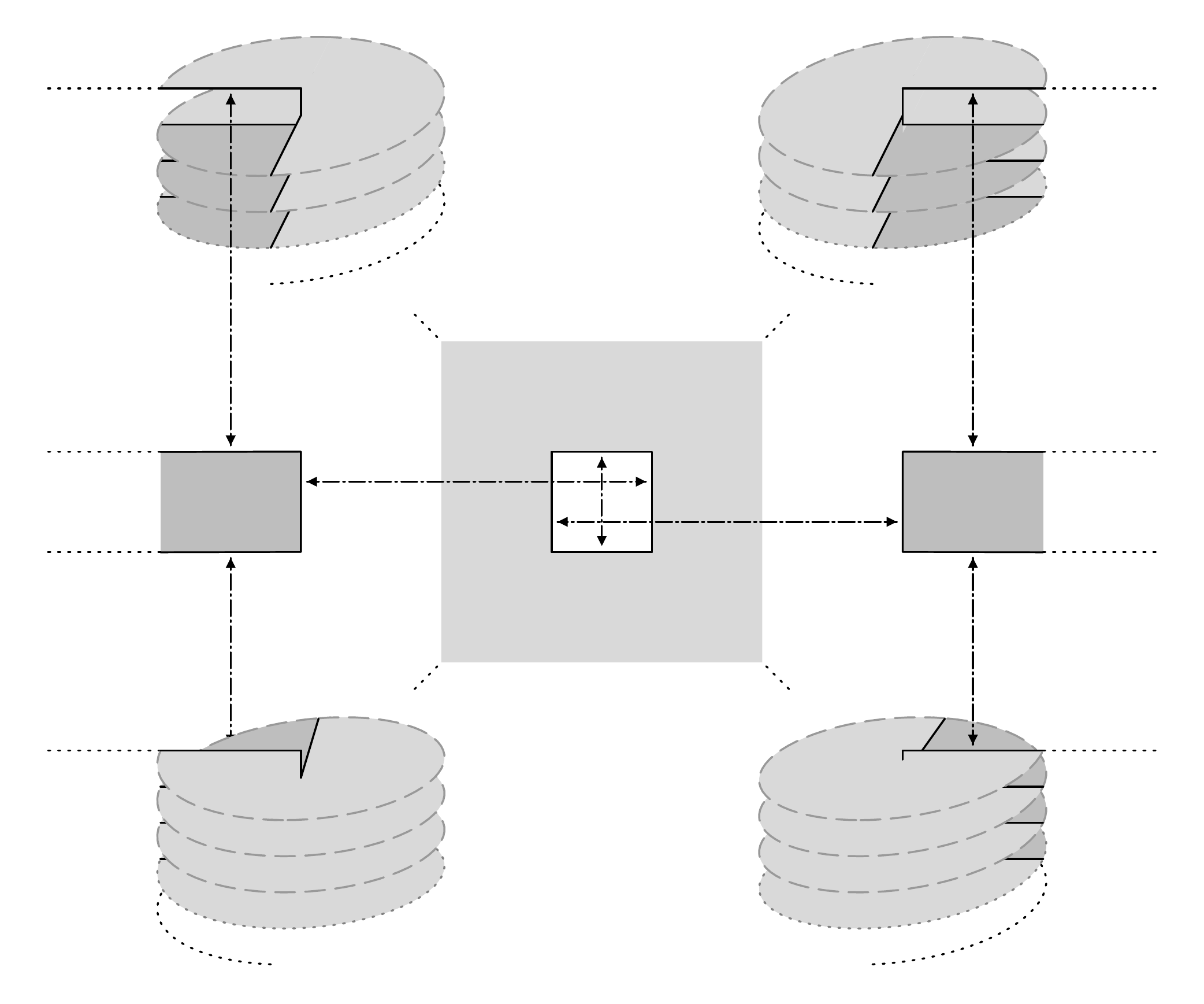}}
\put(138,110){Chart 1}
\put(62,143){$S_r$}
\put(279,143){$S_l$}
\put(138,245){$S_{ur}$}
\put(200,245){$S_{ul}$}
\put(138,45){$S_{br}$}
\put(200,45){$S_{bl}$}
\end{picture}%
\caption{The affine Riemann surface $\cal A_\infty$. The arrows indicate which side shall be glued together. See the text for a description. We used two shades of gray to illustrate Section~\ref{subsec:hlim}.}%
\label{fig:ainf}%
\end{figure}

The definition of $\cal A_\infty$ was slightly more complicated. It still uses Chart~1.
Now we glue together the top and bottom segments of the boundary of Chart~1, corners excluded. On the left open segment, we glue a half infinite strip $S_l=]0,+\infty[\times]-1,1[$ by a translation. We do the same with the right segment and the strip $S_r=]-\infty,0[\times]-1,1[$. The union of the three pieces is bounded by four half lines, radiating from each corner. We will add a similar set to each of them, so let us focus on the upper left corner. Consider the universal cover of $\C^*$. Cut it in half along one lift of the half line $]0,+\infty[$. Keep the half that lies above this line and throw away the other half. In other words, we keep points with polar coordinates $(r,\theta)$ with $\theta>0$ (note that for $r\neq 0$ then $(r,\theta)\neq(r,\theta+2k\pi)$ because we work with the universal cover of $\C^*$). Glue this to the top of $S_l$ by a translation. We will call $S_{ul}$ this part. The parts $S_{bl}$, $S_{ur}$, $S_{br}$ are defined similarly ($br$ stands for bottom right, etc\ldots). See Figure~\ref{fig:ainf}.

\begin{center}\includegraphics[width=5cm]{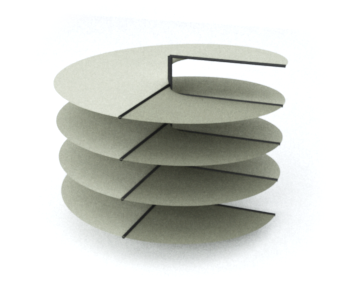}\end{center}
Just for recreation, we present above a 3-dimensional rendering of the first 4 turns of a non-conformal model of the spiral $S_{ul}$. Only a bounded part is shown: each stage should extend to infinity. The conformal structure is the pull-back by the orthogonal projection to the horizontal plane of the standard conformal structure.

\subsection{Embedding the candidate at the limit}\label{subsec:embs}

Now let us embed $\cal A_\infty$ at the limit of $\cal A_K$, as per Definition~\ref{def:1}. We begin by the charts system over 
\[ \cal A:=\coprod_{\scriptsize\begin{array}{c}K\geq 1 \text{ or}\\ K=\infty\end{array}}\cal A_K
.\]
More precisely, we will define the inverse of charts, call them $\psi$, from open subsets of $[0,1]\times \C$ to $\cal A$.
Let $x\in\cal A_\infty$.

Case 1, $x$ is in Chart~1 of $\cal A_\infty$.
Let $\psi : [0,1]\times (\C\setminus\sq) \to \cal A$ that maps $(t,z)$ to the point of $\cal A_{1/t}$ of coordinate $z$ in Chart~1.

Case 2, $x$ is on the horizontal segment. Let $x$ be represented by some $u\in\C$ in the upper segment bounding Chart~1 of $\cal A_\infty$. For any $K$, including $\infty$, let Chart 1', with image the infinite strip $]-1,1[\times \R$, be defined as follows: $U$ is the union of part of Chart~1 with real part between $-1$ and $1$ (two pieces) plus the (one or two) horizontal segments, plus if $K\neq \infty$, the rectangle. These parts are mapped to $\C$ so as to glue according to the structure of $\cal A_K$: in the upper part , map a point of coordinate $z$ to $z$; in the lower part, map it to $z+2i-2i/K$. In the rectangle, map it to $z-i/K+i$, so that the boundaries fit. This extends to the segment in a coherent way. 
We then define $\psi : [0,1]\times (]-1,1[\times \R) \to \cal A$ mapping $(t,z)$ to the point of $\cal A_{1/t}$ of coordinate $z$ in Chart~1'.

Case 3, $x$ is on an infinite half strip or its boundary. For instance $x\in S_l$.
Let $\psi$ be defined on the set of $(t,z)\in[0,1]\times \C$ such that, denoting $K=1/t$, either ($\Im(z)\in\,]-1,1[$ and $\Re(z) \leq 2K$) or $\Re(z)\in\,]0,2K[$.
It maps a point with $\Re(z)\leq 0$ to the point of coordinate $z-1$ in the closure of Chart~1, a point with $\Re(z)\geq 0$ and $\Im(z)\in [-1,1]$ to the point of coordinates $-1+z/K$ in the closure of Chart~2, points with $\Im(z)\geq 1$ to the point of coordinate $(-1+i)+(z-i)/K$ in Chart~1 and points with $\Im(z)\leq -1$ to $(-1-i)+(z+i)/K$

Case 4, $x\in S_{ul}$ (the cases of $S_{bl}$, $S_{ur}$ and $S_{br}$ are treated similarly). The set $S_{ul}$ is bijectively parameterized by polar coordinates $(r,\theta)$ with $\theta>0$. We first set up a map $\psi_0$ from a subset of $[0,1]\times S_{ul}$ to $\cal A$ that is \emph{not} an inverse of chart, as follows.
There are two cases: $\theta\in [0,3\pi/2]+2n\pi$ ($n\geq 0$) and $\theta\in [-\pi/2,0]+2n\pi$ ($n\geq 1$). Let $K=1/t$ and $z=re^{i\theta}$.
In the first case, we map $(t,(r,\theta))$ to the point of coordinates $(1+i)+z/K^{n+1} $ in Chart~1. In the second case, we define the map only if $|z|<2K^{n}$ and 
we map $(t,(r,\theta))$ to the point of coordinate $(1+i/K)+z/K^{n+1}$ in Chart~2. It does not matter if the domain of $\psi_0$ is open or not. Now for any $x\in S_{ul}$, consider the projection $\pi:S_{ul}\to\C^*$. Let $z=\pi(x)$ and choose $r>0$ such that $B(z,r)\subset\C^*$ and such that $\pi$ has an inverse branch $\xi$ on $B(z,r)$ that maps $z$ to $x$. Call $B=\xi(B(z,r))$. There is a $t_0>0$, that depends on $x$ and the choice of $r$, such that $\psi_0$ is defined on $[0,t_0]\times B$. Let then $\psi : [0,t_0[\times B(z,r) \to \cal A$ map $(t,w)$ to $\psi_0(t,\xi(w))$. See Figure~\ref{fig:case4}.

\begin{figure}
\begin{tikzpicture}[inner sep=0pt, outer sep=0pt]
\node at (0,0) {\includegraphics[width=12.5cm]{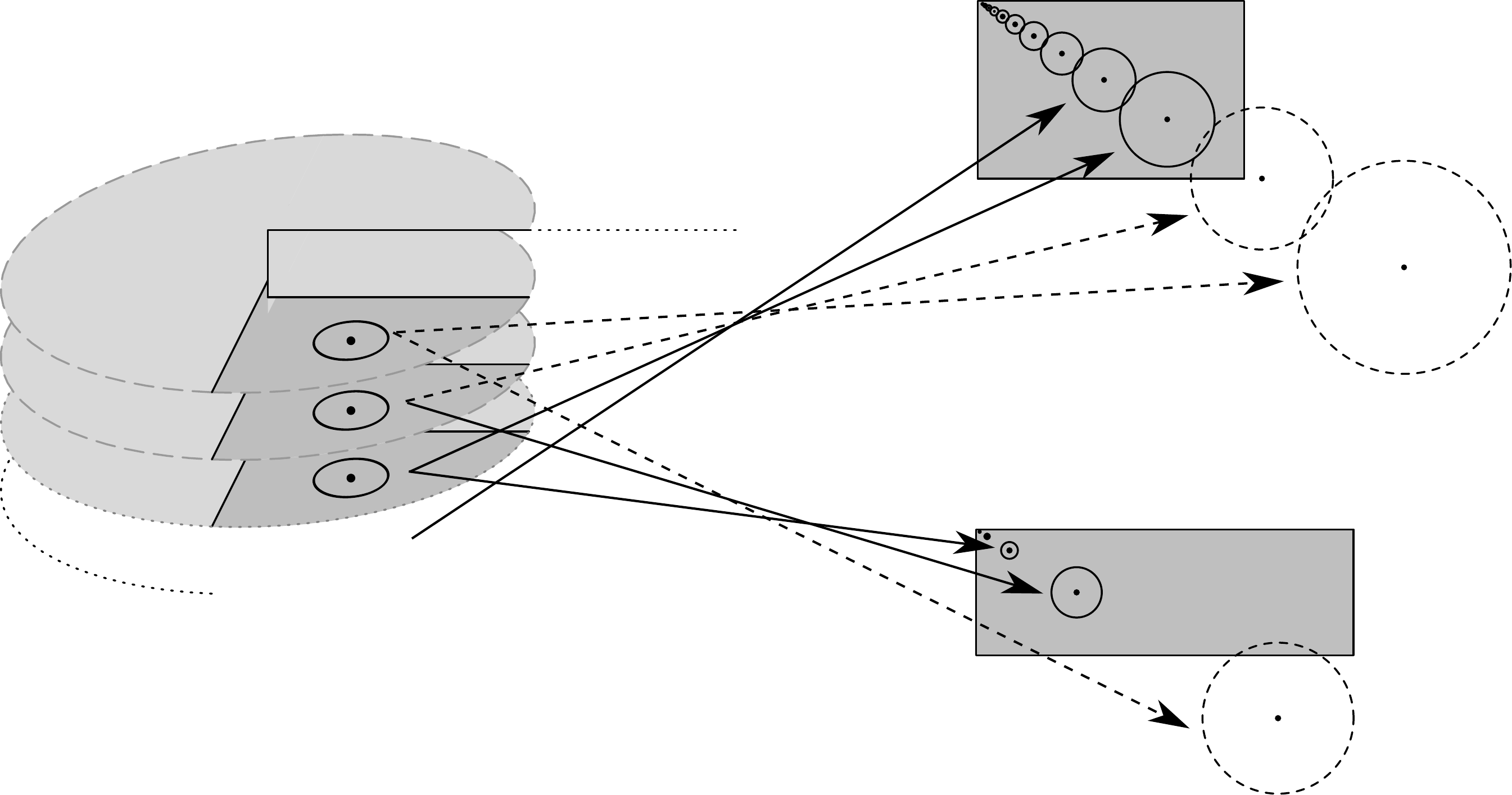}};
\node at (-3.7,2.7) {$S_{ul}$};
\node at (1.5,3.3) {$c_K$};
\node at (1.6,-0.9) {$c_K$};
\draw[-] (5.6,1.55) node[above] {\small $c_K+z/K^{2}$} -- (5.4,1.15);
\draw[-] (4.7,2.3) node[above right] {\small $c_K+z/K^{3}$} -- (4.25,1.85);
\draw[-] (4.4,2.9) node[above right] {\small $c_K+z/K^{4}$} -- (3.48,2.35);
\draw[-] (3.5,-3.1) node[left] {\small $c_K+z/K^{2}$} -- (4.25,-2.7);
\draw[-] (2.2,-2.5) node[below left] {\small $c_K+z/K^{3}$} -- (2.6,-1.7);
\draw[-] (1,-2) node[below left] {\small $c_K+z/K^{4}$} -- (2.0,-1.3);
\node at (0.9,2.6) {$K=1.5$};
\node at (-0.1,-1.6) {$K=3$};
\end{tikzpicture}
\caption{Illustration of the embedding, Case 4, for $\theta\in ]-\pi/2,0[ +2\pi n$ and $n\geq 1$. The upper left corner of the rectangle is $c_K=-1+i/K$. We not only indicated the image of $z$ but also the image of a ball around $z$, so as to illustrate separation.}
\label{fig:case4}
\end{figure}

The charts are given by the inverse of the collection of maps $\psi$ defined above. They are leafwise compatible with the original atlas of $\cal A_K$.
Let $\cal U$ be the union of their domains. It turns out that $\cal U=\cal A$ (the first three cases already cover all but part of $\cal A_\infty$, the rest is covered by Case~4), but that is not important. Put on $\cal U$ the topology generated by the charts, i.e.\ a set is open iff its image by all charts are open.

We leave it to the reader to check that the coordinate changes are continuous. Hence the topology on $\cal U$ is also the topology generated by all the preimages of open sets by charts. 

There remains to check that the topology is separated. This is a key point. It is straightforward if not both points are on $\cal A_\infty$. Otherwise, let us treat one case, the others being analogous. Assume $x\in S_l$ and $x'\in S_{ul}$ has argument $\theta\in [-\pi/2,0]+2n\pi$ ($n\geq 1$). To them we associated points $z\in\C$ and $z'\in\C$ and inverse charts $\psi$ and $\psi'$. 
When $t>0$, neighborhoods $B(z,r)$ and $B(z',r')$ are both mapped to Chart~2 by the respective inverse charts. But for $t$ sufficiently small, their images are disjoint.  Indeed, $B(z,r)$ is mapped to a ball whose center distance to the corner $-1+i$ and whose radius are both of order $1/K=t$, whereas whereas $B(z',r')$ is mapped to a ball
for which these quantities are of order $1/K^{n+1}$. Figure~\ref{fig:case4} illustrates another case: distinct points in $S_{ul}$ with the same projection on $\C$.

\subsection{Conformal maps, compactness, convergent subsequences}\label{subsec:cccs}

Now we recall that we proved in~\cite{C} that the objects $\cal A_K$, seen as Riemann surfaces, are isomorphic to the sphere minus five points.\footnote{This was done by adding charts for each hole} More precisely, for all $K$ there is a unique point $z_1=z_1(K)\in\C$ with $\Re(z)>0$ and $\Im(z)>0$ and a unique conformal map $\rho=\rho_K: \cal A_K \to \C\setminus\{z_1,\ov z_1, -z_1, -\ov z_1\}$, it was called $\phi_K$ in~\cite{C}, such that the following expansion holds at infinity in Chart~1:
$\rho(z)\underset{\infty}=z+0+\cal O(1/z)$. A more correct expression is $\sigma(z)\underset{\infty}=z+0+\cal O(1/z)$ where $\sigma=\sigma_K=\rho\circ \iota(z)$ with $\iota=\iota_K:\C\setminus\sq \to \cal A_K$ being the inverse of Chart~1. Let $z_2=-\ov{z_1}$, $z_3=-z_1$ and $z_4=z_1$. This is a counter clockwise numbering.

By the theory of univalent maps, the set of univalent maps $\C\setminus\sq \to \C$ that satisfy the normalization given by the expansion above is compact for the topology of local uniform convergence.   There is a uniform $R>0$ such that for all such map, the complement of its image is contained in $\ov B(0,R)$.
We can thus extract from all sequence $K_n\to+\infty$ a subsequence such that $\sigma_{K_n}$ converges on every compact subset of $\C\setminus\sq$ and such that moreover $z_1(K_n)$ converges to some finite complex number. By symmetry, all other $z_i$ converge.
Let us call $z_i'$ their limit and
\[\C'=\C\setminus\{z_1',\ldots z_4'\}
.\]
We will prove later that $\{z_1',\ldots z_4'\}$ consists in two distinct points: $\{-x_\infty,x_\infty\}$ with $x_\infty>0$.

\subsection{Building a limit map}\label{subsec:balm}

Thanks to the embedding, we can define the set of points of $\cal A_\infty$ that have a neighborhood on which the conformal maps $\rho_{K_n}:\cal A_{K_n}\to\C$ converge: this is the set
$\cal C$ of $x\in \cal A_\infty$ such that the following holds for any chart $\phi$ of the embedding defined on a set containing $x$ (this is independent of the choice of the chart). Let $z_0\in\C$ be such that $\phi(x)=(0,z_0)$ and let $f_n(z)=\rho_{K_n}(\phi^{-1}(1/K_n,z))$. It is a sequence of holomorphic functions defined in a neighborhood of $z_0$.
We define $x\in\cal C$ whenever this sequence to converges uniformly in a neighborhood of $z_0$. The limit at $z_0$ depends only on $x$, because a uniform limit of continuous functions is continuous in the pair: (parameter,variable).

It is then immediate that $\cal C$ is an open subset of $\cal A_\infty$.
Let us prove that it is closed, hence $\cal C=\cal A_\infty$.
Let $x\in\ov{\cal C}$ and consider the objects $z_0$, $\phi$, $f_n$ as above. 
Take $\epsilon>0$ such that for all $n$ big enough $B(z_0,\epsilon)\subset\on{Dom}(f_n)$.  
There are points $z'$ arbitrarily close to $z_0$ such that $f_n$ converges uniformly in a neighborhood of  $z'$. In particular we can take $z'\in B(z_0,\epsilon)$.
Now, $f_n$ is a sequence of univalent maps on $B(z_0,\epsilon)$. Convergence in a neighborhood of $z'$ implies convergence everywhere on $B(z_0,\epsilon)$.

Thus $\cal C=\cal A_\infty$ and there is a limit map $\rho_\infty:\cal A_\infty\to \C$. It is holomorphic since it is equal in charts to a uniform limit of holomorphic functions. The expression of its restriction to Chart~1, is the normalized injective map $\lim \sigma_{K_n}$. In particular, $\rho_\infty$ is non-constant thus, since $\cal A_\infty$ is connected, $\rho_\infty$ is nowhere locally constant.

Now recall the following theorem of complex analysis: if a locally convergent sequence of holomorphic maps $f_n:U\to X$, $U$ and $X$ being Riemann surfaces, $U$ connected, avoids $a_n$ and $a_n\tend a$, then $\lim f_n$ is either constant equal to $a$ or avoids $a$.

As a consequence, $\rho_\infty(\cal A_\infty)\subset \C'$, as can be seen by working in  charts of the embedding.

\subsection{Uniqueness of the limit}\label{subsec:meths}

We give two approaches to the problem.

\subsubsection{Method~1: Riemann-completeness}

Let us show that the holes of $\cal A_\infty$ are punctures i.e.\ one can complete it into a Riemann surface $\wh{\cal A}_\infty$ by adding three points, let us call them $\{\infty,x_-,x_+\}$, and charts near these points (this is not the only method). The hole at infinity can be covered by adding a point and the inverse chart $B(0,1/2)\to \wh{\cal A}_\infty : 0 \mapsto \infty$ and $z(\neq 0) \mapsto 1/z$.

\begin{figure}
\begin{tikzpicture}
\node at (0,0) {\includegraphics{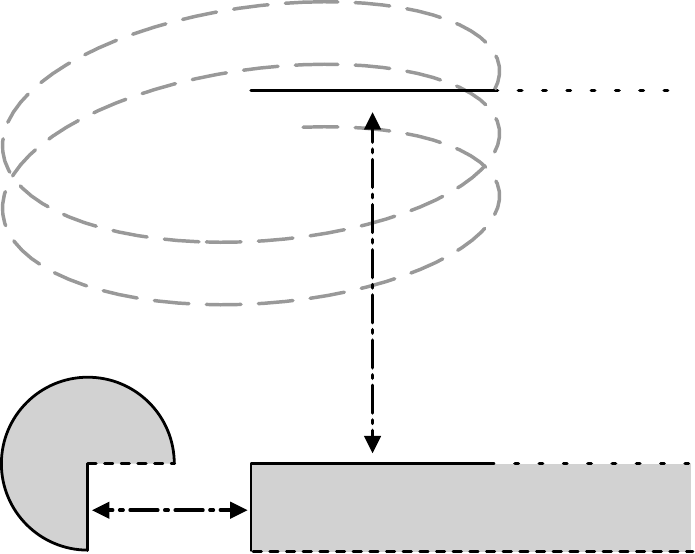}};
\node at (-2.65,-0.75) {in Chart~1};
\node at (1.2,-2.4) {half of $S_l$};
\node at (-1,1.2) {$S_{ul}$};
\end{tikzpicture}
\caption{The set A. The two densely dashed lines indicate where $A$ is glued to its symmetric $B$.}
\label{fig:A}
\end{figure}

The charts at $x_-$ and $x_+$ are less explicit. Let us cut a piece off $\cal A_\infty$ as follows, so as to retain only a neighborhood of the left hole on which it will be easier to work. We remove from Chart~1 the points that are at euclidean distance $\geq 1$ from the upper left and the lower left corners. 
Recall that $1$ is half the size of the side of the square. We also remove the half strip $S_r$ attached to the right side, and the two spirals $S_{ur}$ and $S_{br}$ attached to this strip. Then we cut the rest in two halves $A$ and $B$: the set $A$ consists in points that are either in the upper spiral, or the upper half plane in $\cal S_l$ and in Chart~1. The set $B$ consists in the other points. These two pieces are glued along the real axis in $\cal S_l$ and along a horizontal radius in each trucated disk in Chart~1. Piece $A$ is illustrated on Figure~\ref{fig:A}.

\begin{figure}
\begin{tikzpicture}
\node at (0,0) {\includegraphics[width=8cm]{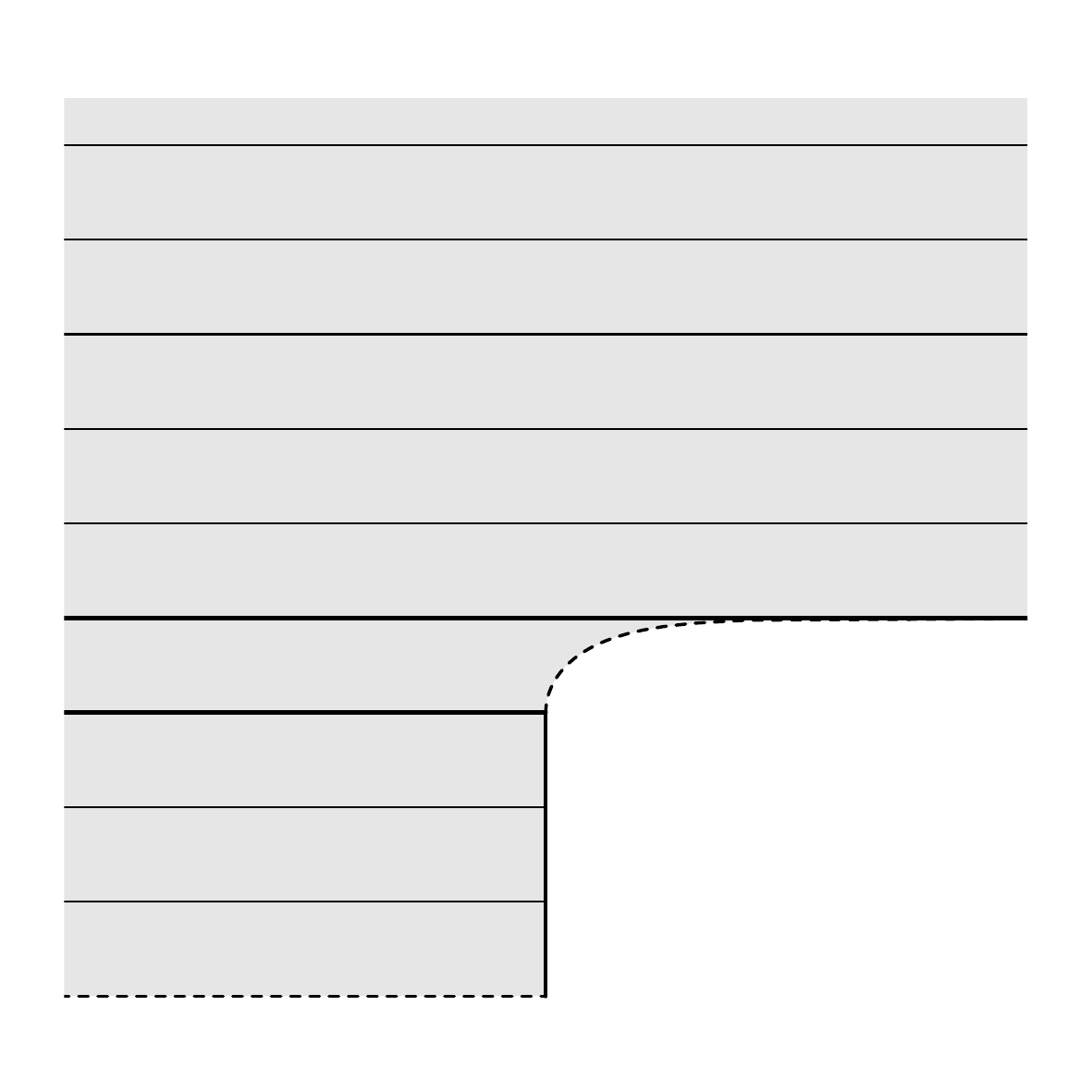}};
\node at (-1.7,-2.3) {part of Chart~1};
\node at (-1.7,-0.87) {half of $S_l$};
\node at (0,1.5) {$S_{ul}$};
\end{tikzpicture}
\caption{The image of $A$ by the logarithm. Horizontal lines correspond to heights that are multiples of $\pi/2$. Dashed lines are where $A$ and $B$ are be glued. The three pieces are bounded by the dashed lines and the thick lines.}
\label{fig:embed}
\end{figure}

Piece $A$ naturally embeds in the universal cover of $\C^*$. Let us take its image by the logarithm. We obtain Figure~\ref{fig:embed}. The image $A'\subset\C$ is bounded by the horizontal half line whose rightmost end is $0$, the segment $[0,\frac{3\pi}2 i]$ and a curve starting from the last point of this segment, with real part tending to $+\infty$ and an asymptote equal to the horizontal line $\Im z=2\pi$.
Piece $B$ is isomorphic to the image $B'=s(A')$ by the symmetry of real axis: $s(z)=\ov z$.

\begin{figure}
\begin{tikzpicture}[>=angle 90]
\node at (0,1.5) {\includegraphics[width=4cm]{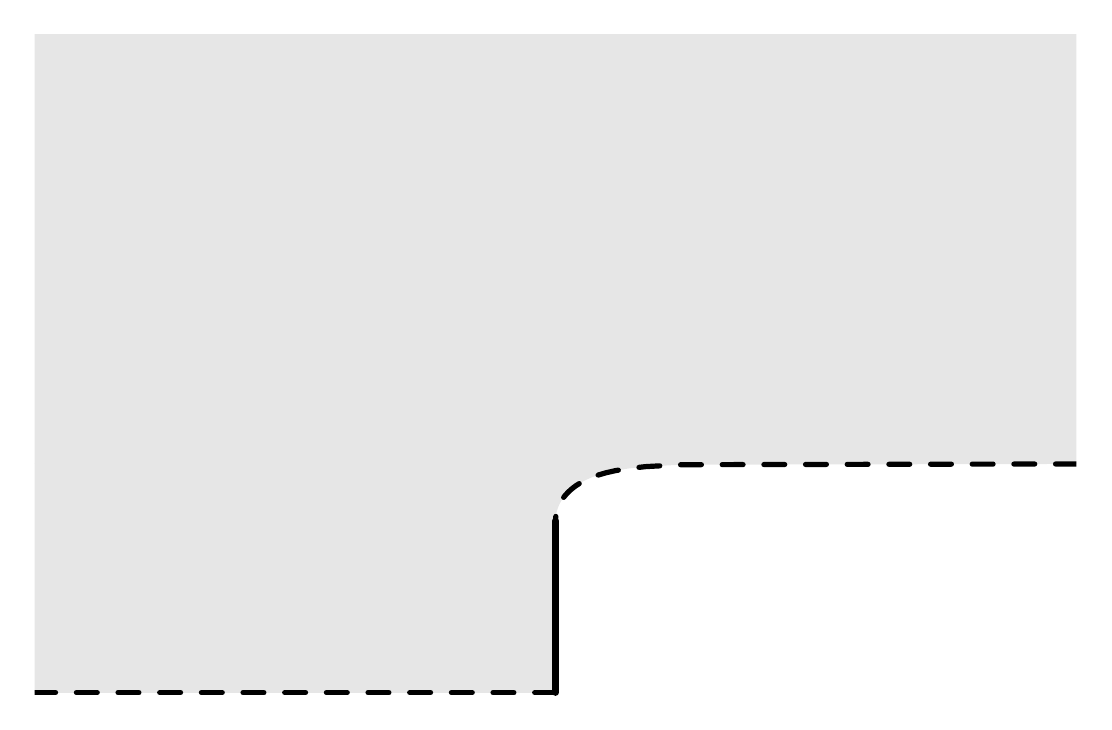}};
\node at (0,-1.5) {\scalebox{1}[-1]{\includegraphics[width=4cm]{dessin_tw_2.pdf}}};
\node at (5,0) {\includegraphics[width=4cm]{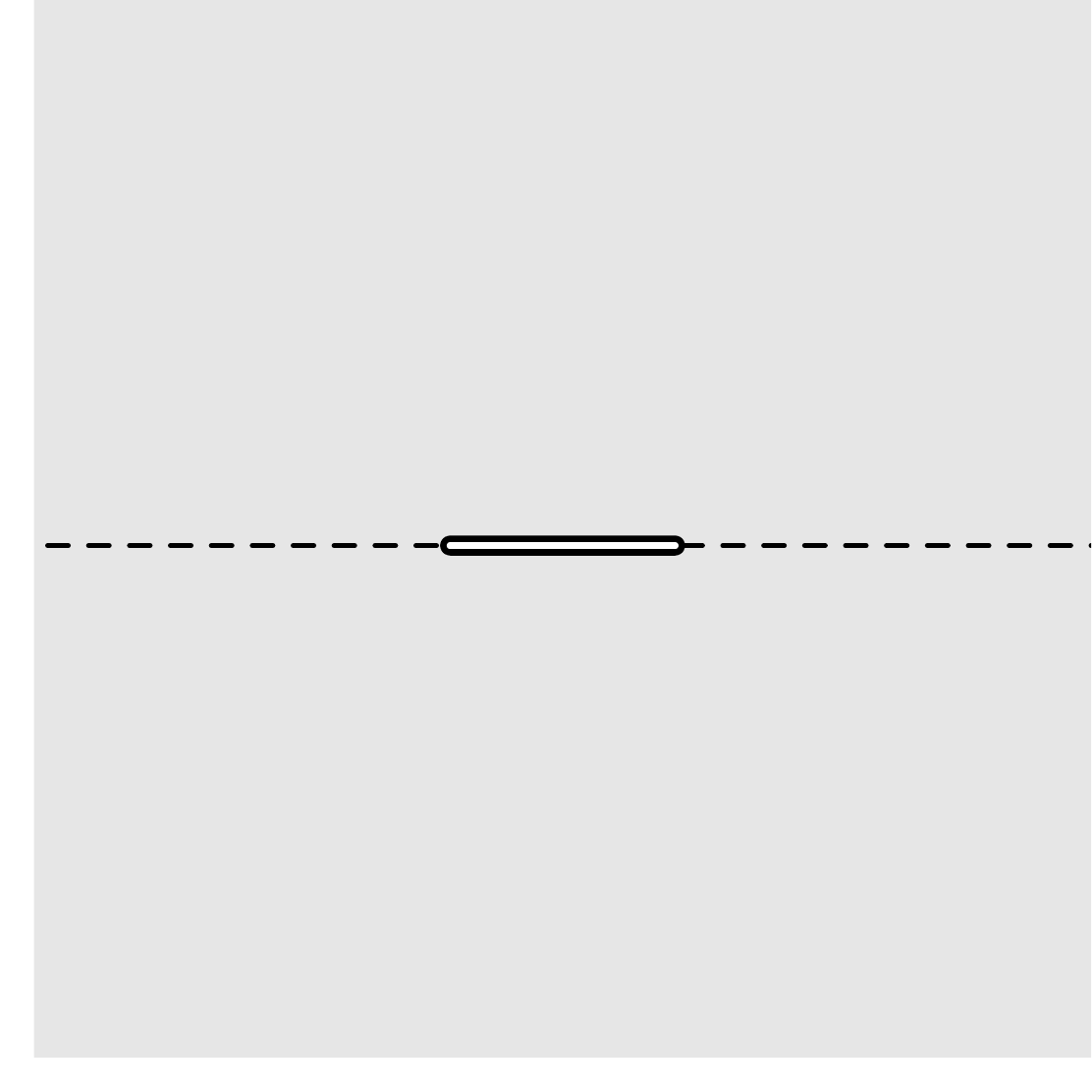}};
\node at (-1.2,1.8) {$A'$};
\node at (-1.2,-1.8) {$B'$};
\node at (5,1.5) {$\H$};
\node at (5,-1.5) {$s(\H)$};
\draw[->] (1.7,1.7) -- (3.5,1.2);
\draw[->] (1.7,-1.7) -- (3.5,-1.2);
\end{tikzpicture}
\caption{Uniformizing to half planes.}
\label{fig:toH}
\end{figure}

The gluing between $A$ and $B$ becomes a gluing between $A'$ and $B'$ along their boundaries by $z\mapsto s(z)$, except that we \emph{do not} glue the segment $[0,\frac{3\pi}2 i]$ and its symmetric. Now $A$ is bounded by a Jordan curve in $\wh{\C}$, hence the confomal maps from $A$ to the upper half plane extend continuoulsy to this boundary into a homeomorphism of the closures. The same holds for $B$ and $B'$ by symmetry.
By Schwarz reflection, joining these two isomorphisms extends to an isomorphism of the full piece to $\C\setminus [a,b]$ where $[a,b]\subset\R$, see Figure~\ref{fig:toH}. We can thus add a point at infinity in this chart. The situation for $x_+$ is similar.

We have thus completed $\cal A_\infty$ into a Riemann surface homeomorphic to $\wh{\C}$. It is thus isomorphic to $\wh{\C}$ and conformal maps from $\cal A_\infty$ to subsets of $\C$ extend holomorphically to isomorphisms between $\wh{\cal A}_\infty$ and $\wh{\C}$.
By rigidity, there is thus a unique conformal map from $\cal A_\infty$ to an open subset of $\C$ with expansion in Chart~1: $\phi(z)=z+0+\cal O(1/z)$ when $z\to\infty$.

This uniqueness implies that the limit map $\rho_\infty$ is \emph{independent} of the sequence $K_n$.

\begin{remark} 
We have extended $\cal A_\infty$ into a compact Riemann surface $\wh{\cal A}_\infty = \cal A_\infty \cup\{x_-,x_+,\infty\}$ and we have proved convergence, when $K\to+\infty$, of $\rho_K$ to an isomorphism $\rho_\infty : \cal A_\infty \to\C\setminus\{z_-,z_+\}$, where $z_-$ and $z_+$ are the images of $x_-$ and $x_+$ by an extended isomorphism $\wh{\cal A}_\infty \to \wh\C$. We can also define as in \cite{C} completions $\wh{\cal A}_K$, but it is not obvious a priori to embed $\wh{\cal A}_\infty$ at the limit of these completions, near the points $x_-$ and $x_+$. However the isomorphisms toward $\wh\C$ and the convergence of $\rho_K$ to $\rho_\infty$ give a ``magic'' way to do it.
\end{remark}

\subsubsection{Method~2: Affine-completeness}\label{subsub:meth2}

The second approach consists in first proving that there is no way to extend $\cal A_\infty$ as a connected affine Riemann surface.

\begin{figure}[h]
\begin{tikzpicture}
\node at (0,0) {\includegraphics[width=12cm]{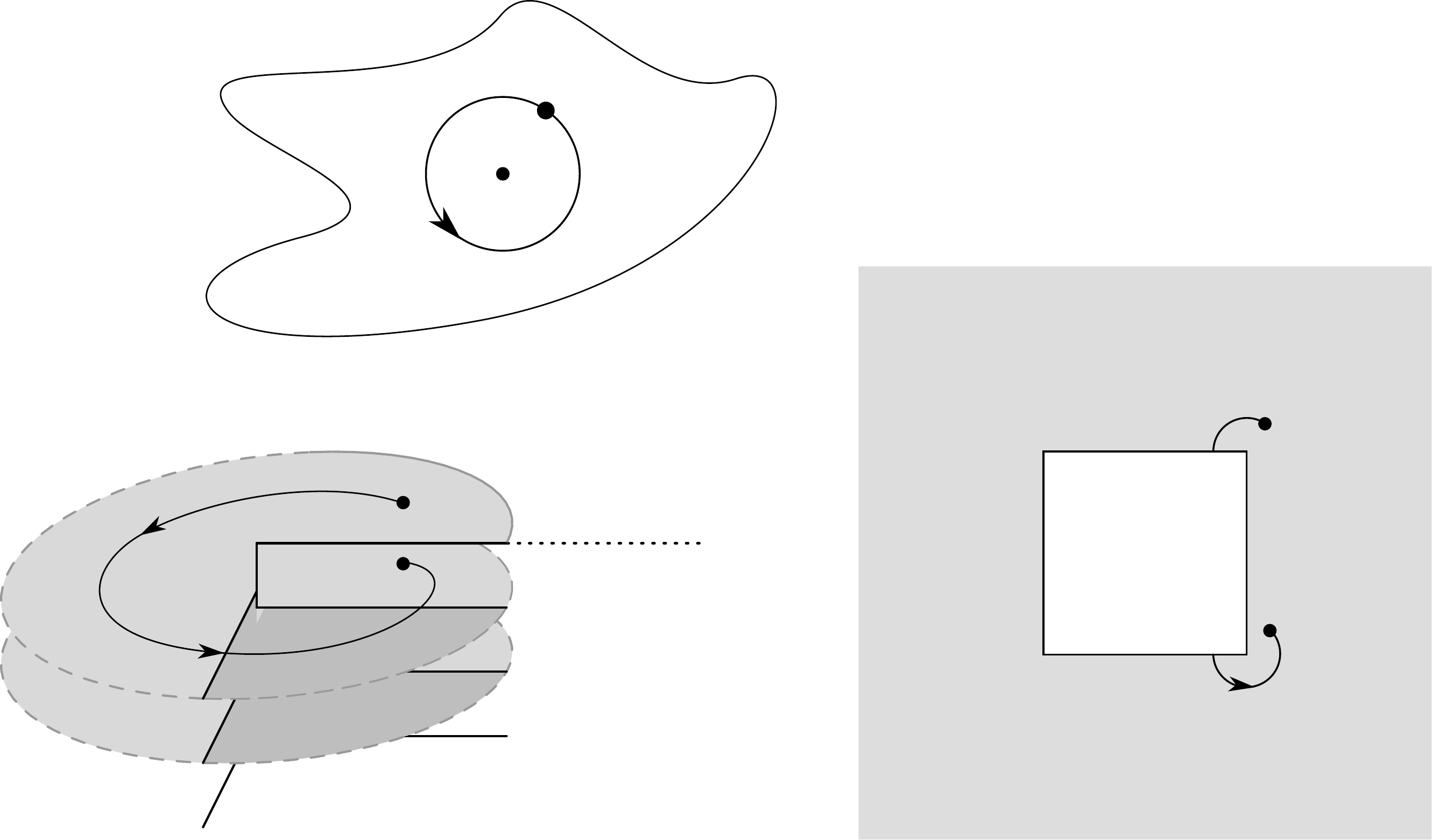}};
\node at (-2.5,1) {chart of $\cal A'_\infty$ near $x_2$};
\node at (-1.4,2) {$x_2$};
\node at (-0.1,-2.3) {basic charts};
\node at (-0.05,-2.75) {for $\cal A_\infty$};
\node at (0,-.7) {or};
\end{tikzpicture}
\caption{If there were an affine chart of an extension, centered on corner, then there would be a small loop that closes in this chart but does not in the basic charts, leading to a contradiction. Here we illustrated only two cases of non-closure, depending on the starting point.}
\label{fig:loop}
\end{figure}

Assume the opposite and call $\cal A_\infty'$ such an extension. 
In affine Riemann surfaces there is a notion of straight lines: these are the parameterized curves that are locally pieces of lines in affine coordinates. Better: given an initial point and speed, there is a unique parameterized curve $\gamma: [0,t_{\on{max}}[$, $t_{\on{max}}\in]0,+\infty]$, such that the speed is constant in affine charts. Let us call them \emph{geodesics}.

A simple study of $\cal A_\infty$, left to the reader, reveals that the only straight lines that do not run forever, i.e.\ for which $t_{\on{max}}<+\infty$, are those that hit a corner while being eventually contained in one of the basic charts we used to define $\cal A_\infty$, or in the closure of one. 

Consider now a point $x_0$ in the boundary of $\cal A_\infty$ in $\cal A_\infty'$. Consider an affine chart near $x_0$. Let $\gamma$ be a path that is a straight segment in the chart, starting at some point $x_1\in \cal A_\infty$. 
Let $x_2=\gamma(t_2)\in \cal A_\infty'$ be the first point where the path leaves $\cal A_\infty$ when we start from $x_1$ (the point $x_2$ may or may not be equal to $x_0$).
Since the latter geodesic does not run forever in $\cal A_\infty$, it hits a corner in the way described in the previous paragraph.

In a chart for $\cal A'_\infty$ at $x_2$, we can define a small circular loop around $x_2$ starting from a point on the geodesic. It goes back to its initial point. Let us follow this path in the charts defining $\cal A_\infty$. Since the change of coordinates are affine, this path is circular in each basin chart, centered on right corners or on left corners, and a close study in basic charts shows that
this its endpoint and starting point will be different in all cases (for instance, a point on another sheet of the spiral attached to the corner), see Figure~\ref{fig:loop}. This is a contradiction. Q.E.D.

Second, we use the particular form of the connection. 
Let us recall that if we transport the affine atlas of $\cal A_K$ by the conformal map $\rho_K:\cal A_K \to \C\setminus\{z_1(K),\ldots,z_4(K)\}$, the affine charts $\phi : U\to \C$ (where $U\subset\C\setminus\{z_1(K),\ldots,z_4(K)\}$) satisfy that the quantity $\zeta(z):=\phi''(z)/\phi'(z)$ depends on $z$ but not on $\phi$. We proved in~\cite{C} that the function $\zeta$ has the following form:
\[\zeta_K(z)=\frac{\log K}{2\pi i} \left(\frac{-1}{z-z_1}+\frac{1}{z-z_2}+\frac{-1}{z-z_3}+\frac{1}{z-z_4}\right).\]
The affine charts on $\cal A_\infty$ also give rise to a function $\zeta_\infty$, defined on the subset $\rho_\infty(\cal A_\infty)$ of $\C'$ (recall that $\C'$ is defined as the complement of the limit of the $z_i$).

Let us prove that $\rho_\infty(\cal A_\infty)= \C'$. From the convergence of $\rho_{K_n}$ to $\rho_\infty$ (w.r.t.\ the embedding of $\cal A_\infty$ at the limit of $\cal A_{K_n}$), we deduce that $\zeta_{K_n}\tend \zeta_\infty$ on every compact subset of $\rho_\infty(\cal A_\infty)$. 
Let us write $\zeta_K = P_K/Q_K$ with $Q_K(z)=(z-z_1)\cdots(z-z_4)$. Since $z_{K_n}$ converges, the denominator $Q_K$ converges.
Now since $\zeta_{K_n}$ converges on an open set, the sequence of polynomial $P_{K_n}$, of bounded degree, converges on an open set, hence its coefficients converge, so it converges uniformly on compact subsets of $\C$. Hence $\zeta_{K_n}$ converges uniformly on every compact subsets of $\C'$ to a rational map $f$ whose poles are contained in $\C'\setminus\C$, i.e.\ the limits of the $z_i$. Hence $\zeta_\infty$ is the restriction of $f$ to $\rho_\infty(\cal A_\infty)\subset \C'$. If the latter inclusion was strict, one could use $f$ to extend $\cal A_\infty$ as an affine Riemann surface. But we saw that this was impossible. Q.E.D.

Now that we know that $\cal A_\infty$ is isomorphic, as a Riemann surface, to the Riemann sphere minus a finite number of point, it follows that $\rho_\infty$ is independent of $K_n$, because we normalized it (by imposing $\rho_\infty(x) = z +0+\cal O(1/z)$ when $z\tend\infty$, where $z$ is the coordinate of $x$ in Chart~1).

\subsection{Convergence}\label{subsec:convergence}

Uniqueness of the limit, that we just proved, implies convergence, when $K\to+\infty$, of $\rho_K$ to an isomorphism $\rho_\infty : \cal A_\infty \to \rho_\infty(\cal A_\infty) \subset\C$. We have not yet proved convergence of the $z_i$ as $K\to +\infty$. This will also be proved by uniqueness of their limits.

Recall that there is some $R>0$ independent of $K$ so that all $z_i$ belong to $\ov B(0,R)$, as explained in Section~\ref{subsec:cccs}. Recall that for all sequence $K_n\tend +\infty$, we can extract a subsequence such that the $z_i$ converge.

We know, as explained at the end of Section~\ref{subsec:balm}, that for such a sequence $K_n$, we have $\rho_\infty(\cal A_\infty)\subset \C'$ where $\C'$ is $\C$ minus the limit of the $z_i$. We also proved that the set $\rho_\infty(\cal A_\infty)$ is independent of the choice of $K_n$. We obtained by two different methods in Section~\ref{subsec:meths} that $\rho_\infty(\cal A_\infty) = \C \setminus \{\text{a finite set}\}$. With the first method, we proved that this set has two elements. With the second, we have proved that $\rho(\cal A_\infty)=\C'$.

One way to deduce convergence of the $z_i$ from either method is to first prove that the $z_i$ vary continuously when $K<+\infty$ and then since their cluster set is finite, they have to converge.

Let us give two alternative proofs. We will also identify which $z_i$ have a common limit, and obtain information on the position of the limits in the plane. These proofs will be formulated so that either of the two methods of Section~\ref{subsec:meths} can be used as a starting point, without needing to use both.

\subsubsection{}

This approach is probably the most general and powerful, but a bit long to detail. First see that $\cal A_\infty$ has three holes, as a topological surface (of course if one comes from Method~1, then one already knows that and better). One gets that one of them is $\infty$, one corresponds to the two left corners and one to the two right corners. Consider the hole on the ``left''. Take a small loop around it, call it $\gamma_\infty$. Then if one is able to define parameterized loops $\gamma_K$ that tend to $\gamma_\infty$, and see that it separates from $\infty$ the two left corners, this will prove that $z_2(K)$ and $z_3(K)$ are separated from $\infty$ by the image of $\gamma_K$ by $\rho_K$, which tends to $\rho_\infty(\gamma_\infty)$. By choosing $\gamma_\infty$, we can ensure that the latter is small  because the holes are punctures (we gave a direct proof of that in Method~1; in Method~2 it follows from the fact that $\cal A_\infty$ is isomorphic to the sphere minus finitely many points). So we know that there are two distinct points $z_-$ and $z_+$ so that $z_2(K)$ and $z_3(K)$ tend to $z_-$ and so that $z_1(K)$ and $z_2(K)$ tend to $z_+$.

There is an orientation reversing symmetry of $\cal A_\infty$ that preserves the two holes. This symmetry is conjugated by $\rho_\infty$ to an anti-holomorphic symmetry of $\C$ minus a finite number of points. This symmetry automatically extends to $\wh\C$ by rigidity, and according to the normalization\footnote{$\rho_\infty(z)=z+0+\cal O(1/z)$ at $\infty$, expressed in Chart~1}, it is conjugated to $z\mapsto \ov z$. Hence the points $z_-$ and $z_+$ belong to $\R$. There is another symmetry, with vertical axis. This symmetry is conjugated to the reflection by the imaginary axis in $\C$. Hence $z_-=-z_+ \neq 0$. Interestingly, this implies that the axis of reflection is mapped onto the imaginary axis. To see that $z_- <0$, use for instance the fact that the left hole is on the left of the symmetry axis, the latter being the intersection of the imaginary line with the closure of Chart~1. According to the normalization at $\infty$, the map $\rho$ from this axis to the imaginary axis respects the natural orientation of these lines (i.e.\ upward is mapped to upward). Nearby points on the left of one axis are thus mapped to nearby points on the left of its image. The left component is thus mapped on the left.

\subsubsection{}
Here, we start from the symmetries of $\cal A_\infty$. By similar arguments, using rigidity and the normalization at $\infty$, the symmetry with vertical axis is conjugated to the reflection by the imaginary axis.
The normalization at infinity also implies that the image of the vertical axis is the whole imaginary axis, by the intermediate value theorem.
On the other hand, there must be a merge between the limits of the $z_i(K_n)$.
Indeed in Method~1 we have that $\C\setminus \C'$ has at most two points. And in Method~2, we can use the fact that the residue at $z_i$ is $\pm\log K/2\pi i$ and tends to infinity: without a merge, the functions $\zeta_{K_n}$ would tend to $\infty$ everywhere on $\C'$. Recall that $z_2=-\ov{z_1}$, $z_3=-z_1$, $z_4=\ov{z_1}$.
Because of these symmetries between the $z_i$, and because the image of $\cal A_\infty$ is disjoint from $\C'$ and contains the imaginary axis, the only possible merge is that $z_1$ and $z_4$ tend to a positive limit $u_+$, and that $z_2$ and $z_3$ tend to the opposite $u_-=-u_+$: in particular $\C\setminus\C'=\{u_-,u_+\}$. 
In the Method~1, since $\C\setminus\C' = \{u_-,u_+\} \subset\C\setminus\rho_\infty(\cal A_\infty)$ and since we already know that the latter has exactly two points $z_-$ and $z_+$, we conclude that $\rho_\infty(\cal A_\infty)=\C'$ and that $\{z_-,z_+\}=\{u_-,u_+\}$. To see that $z_-=u_-$ and $z_+=u_+$, and not the other possibility, we can use separation by the vertical axis, as in the previous paragraph. In Method~2, we proved that $\rho_\infty(\cal A_\infty)=\C'$.

\begin{remark}
We obtain thus for Method~2 a proof that $\cal A_\infty$ is homeomorphic to the sphere with three holes. This was proved at an early stage in Method~1. The two proofs are independent.
\end{remark}

\subsection{Limit of the rectangle}\label{subsec:hlim}

Let $W_K$ be the image by the isomorphism $\rho_K : \cal A_K\to \C\setminus\{z_1(K),\ldots,z_4(K)\}$ of the open rectangle defined by Chart~2. Let $C_K=\ov{W_K}$. 

We can now prove that there is a limit, in the sense of Hausdorff, of $C_K$, and determine this limit.
Let us prove that it is equal to the union $C_\infty = \{z_-,z_+\} \cup \rho_\infty(M)$ where $M_\infty$ is the union of the following subsets of $\cal A_\infty$, also illustrated on Figure~\ref{fig:ainf} by the dark gray shade: the closure of the two half strips $S_l$ and $S_r$, and for each of the $4$ spirals $S_{ul}$, $S_{bl}$, $S_{ur}$, $S_{br}$, an infinite sequence of subsets that we explicit here only for $S_{ul}$ (the other cases being analog): recall that $S_{ul}$ can be seen as a subset of the universal cover of $\C^*$. We keep only those points whose projection to $\C^*$ belong to the closed lower right quadrant.

\begin{theorem} In the sense of  Hausdorff, $C_K \tend C_\infty$ when $K\tend+\infty$.
\end{theorem}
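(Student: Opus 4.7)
The plan is to prove Hausdorff convergence $C_K \to C_\infty$ by checking the two inclusions $C_\infty \subset \liminf C_K$ and $\limsup C_K \subset C_\infty$, working in a fixed closed disk of $\C$ that contains every $C_K$ (such a disk exists by Sec.~\ref{subsec:cccs}). The main tools are the embedding of $\cal A_\infty$ at the limit of $\cal A_K$ from Sec.~\ref{subsec:embs}, the uniform convergence $\rho_K \to \rho_\infty$ from Sec.~\ref{subsec:balm}, and the convergence $z_i(K) \to z_\pm$ from Sec.~\ref{subsec:convergence}.

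For the liminf inclusion, the points $z_\pm$ are automatic as limits of the corners $z_i(K) \in C_K$. A remaining $x \in C_\infty$ has the form $x = \rho_\infty(y)$ with $y \in M_\infty$, and the definition of $M_\infty$ has been tailored so that each such $y$ is seen by at least one embedding chart---Case~3 for $y \in \overline{S_l} \cup \overline{S_r}$, and the second sub-case of Case~4 for the specified spiral sectors---whose $t = 1/K$ slice lands in the closure of Chart~2 of $\cal A_K$. Taking for each large $K$ a point $y_K \in \overline{R_K}$ with embedding coordinate equal to that of $y$ yields $y_K \to y$ in the $\cal A$-topology, hence $\rho_K(y_K) \to \rho_\infty(y) = x$ with $\rho_K(y_K) \in C_K$.

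For the limsup inclusion, I would first upgrade $\rho_K \to \rho_\infty$ to convergence of the inverse maps: since each $\rho_K$ is univalent, $\rho_K^{-1}$ converges to $\rho_\infty^{-1}$ uniformly on compact subsets of $\C \setminus \{z_-, z_+\}$, in the $\cal A$-topology. Given $x_{K_n} \in C_{K_n}$ with $x_{K_n} \to x$, the case $x \in \{z_-, z_+\}$ is trivial; otherwise $x$ is bounded away from the corners for large $n$, and (after an $o(1)$ perturbation to reduce to $x_{K_n} \in W_{K_n}$) we obtain $y_{K_n} := \rho_{K_n}^{-1}(x_{K_n}) \to y := \rho_\infty^{-1}(x) \in \cal A_\infty$. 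Showing $y \in M_\infty$ then finishes the proof, since $x = \rho_\infty(y) \in \rho_\infty(M_\infty) \subset C_\infty$.

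The main obstacle is precisely this last identification, which amounts to classifying the possible $\cal A$-limits of sequences $y_{K_n} \in R_{K_n}$. I would argue chart by chart: if $y$ lies outside $M_\infty$ in a region whose embedding chart maps its $t = 1/K$ slice into Chart~1 of $\cal A_{K_n}$---e.g.\ the interior of Chart~1 of $\cal A_\infty$ via Case~1, or the ``wrong'' angular sectors of a spiral via the first sub-case of Case~4---then the convergence $y_{K_n} \to y$ forces $y_{K_n}$ eventually into Chart~1 of $\cal A_{K_n}$, which is disjoint from the open rectangle; this contradicts $y_{K_n} \in R_{K_n}$. The remaining points of $\cal A_\infty$, namely those covered by an embedding chart whose $t = 1/K$ slice touches the rectangle (Case~2 for the horizontal identified segment, Case~3 for $\overline{S_l} \cup \overline{S_r}$, and the second sub-case of Case~4 for the chosen spiral sectors), are precisely the points in $M_\infty$; the one subtlety is to verify, in Case~2, that the horizontal identified segment is accounted for by the definition of $M_\infty$ (interpreted via the dark-gray region of Figure~\ref{fig:ainf}), since under Chart~1' coordinates the rectangle degenerates to the line $\Im z = 1$ as $K \to \infty$. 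Once this classification is established, $y \in M_\infty$ follows, and the theorem is proved.
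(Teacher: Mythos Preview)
Your proof is correct and follows essentially the same route as the paper. The paper compresses the whole argument into a single dichotomy---``by inspecting the embedding, for every $x\in\cal A_\infty$ either $x\in M_\infty$ and every neighborhood of $x$ in $\cal A$ meets $M_K$ for large $K$, or $x\notin M_\infty$ and some neighborhood of $x$ meets no $M_K$''---and then says the conclusion follows from $\rho_K\to\rho_\infty$. Your $\liminf$/$\limsup$ decomposition is precisely the unpacking of that sentence: your $\liminf$ step is the first half of the dichotomy, and your chart-by-chart classification in the $\limsup$ step is the second half. Your detour through convergence of the inverse maps is a legitimate way to phrase the $\limsup$ direction; it is equivalent to the observation that a locally uniformly convergent sequence of univalent maps has images that eventually swallow any compact set in the image of the limit, which is what one would use to flesh out the paper's one-line deduction.

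The subtlety you flag about the horizontal identified segment is real: by the Case~2 chart the closed rectangle in $\cal A_K$ does limit onto that segment, so points there belong to the Hausdorff limit, whereas the textual description of $M_\infty$ (closures of $S_l$, $S_r$ plus the specified spiral sectors) does not obviously include it. The paper relies on the dark-gray shading in Figure~\ref{fig:ainf} to pin down $M_\infty$; you are right to note that the text alone leaves this point to be checked.
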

\begin{proof}
First, as limits of corners, the points $z_-$ and $z_+$ belong to the limit. Second recall the embedding $\cal A$ of $\cal A_\infty$ at the limit of $\cal A_K$. Let $M_K$ be the closure of the rectangle in $\cal A_K$.
By inspecting the embedding, we see that given any point $x\in \cal A_\infty$, either $x\in M_\infty$ and then all neighborhoods of $x$ in $\cal A$ intersect $M_K$ when $K$ is big enough, or $x\notin M_\infty$ and then there is a neighborhood of $x$ that intersect none of the $M_K$. The conclusion follows from this and from the convergence $\rho_K\tend\rho_\infty$ w.r.t. the embedding.
\end{proof}

\section{A second look}

Here, we adopt a more abstract point of view. Let $\cal M_n$ be any sequence of manifolds of one kind (topological, differentiable, analytic, Riemann surfaces, affine Riemann surfaces, \ldots). Let us call \emph{legal maps} the valid changes of charts for this kind: they form a subset of the set of homeomorphisms between open subsets of $\R^d$.
Recall that we defined 
\[\I = \setof{1/n}{n>0}\cup\{0\} = \{1,1/2,1/3,\ldots,0\}.\]
Since $\I\subset\R$, we may consider $\I\times\R^d$ as a subset of $\R^{d+1}$. We endow $\I\times\R^d$ with the topology induced from the standard topology on $\R^{d+1}$.
This set decomposes as \emph{leaves}: $L_\infty$ and $L_n$ with $n\in\N^*$:
\[L_n=\{1/n\}\times \R^d,\quad L_\infty=\{0\}\times\R^d.\]
Let us define a legal change of coordinates $\psi$ between open subsets $U$ and $U'$ of $\I\times \R^d$ as a map that is defined on $U$, sends $U$ to $U'$, sends each leaf into itself, is legal on each leaf, and is continuous on $U$ for the topology on $\I\times \R^d$. Because legal maps on $\R^d$ are continuous, the last condition is equivalent to asking $\psi_n$ to converge uniformly to $\psi_\infty$ on compact subsets of $U\cap L_\infty$, where leaves are identified with $\R^d$ and $\psi_n$ is the restriction of $\psi$ to $U\cap L_n$.

\begin{definition}
A \emph{virtual point} associated to the sequence $\cal M_n$ is an equivalence class of the following objects:
\\
$\bullet$ a point $a\in L_\infty$ and an open subset $U$ of $\I\times\R^d$ such that $a\in U$, together with maps $\phi_n : U\cap L_n\to \cal M_n$ each of which is the inverse of a valid chart for $\cal M_n$,
\\
for the following equivalence relation:
\\
$\bullet$ $(a,U,(\phi_n)_{n>0}) \sim (a',U',(\phi'_n)_{n>0})$ whenever there exists open neighborhoods $V$ and $V'$ of $a$ and $a'$ in $\I\times \R^d$ and a legal change of coordinates $V\to V'$ that is a restriction on each leaf of $\phi'_{n}\circ \phi_n^{-1}$ (which is a homeomorphism between open subsets of $L_n$).
\end{definition}

Note that in this definition, there is no $\cal M_\infty$ and no $\phi_\infty$.
The definition of equivalence implies that there is a neighborhood of $a$ that is  contained in the domain of definition of $\phi'_{n}\circ \phi_n^{-1}$ for $n$ big enough.

\begin{example}
As an illustration, consider the sequence $\cal A_n = \cal A_{K_n}$ of affine Riemann surfaces that we defined in the previous part, with $K_n\to +\infty$. We also defined an affine Riemann surface $\cal A_\infty$. In Section~\ref{subsec:embs}, we defined charts of an embedding of $\cal A_\infty$ at the limits of $\cal A_{K_n}$, that we can use to define virtual points. For instance consider a point in the spiral $S_{ul}\subset\cal A_\infty$ attached to the upper left corner, and situated close to the common boundary line with the left half strip $S_l$. Let this point have coordinate $z_0=\rho e^{i\theta}$ in a leaf of the chart associated to $S_{ul}$: $\theta>0$ is close to $0$. Choose $r>0$ small so that the ball $B(z_0,r)$ is contained in the upper half plane. For $n>0$, let $\phi_n:B(z_0,r)\to $ Chart~1, $\phi_n(z)= (-1+i) + z/K_n$. The image of $B(z_0,r)$ by $\phi_n$ is a sequence of balls of size $r/K_n$ centered on a point at distance $|z_0|/K_n$ from the upper left corner. The virtual point is the equivalence class of $(z_0,\I\times B(z_0,r),(\phi_n))$, but we find it easier to think of it using the sequence of balls that we just mentionned.
\end{example}

Let $\frak{M}$ be the set of all virtual points associated to the sequence $\cal M_n$.
An embedding of a manifold $\cal M_\infty$ at the limit of $\cal M_n$, as per definition~\ref{def:1} induces a map $\cal M_\infty \to \frak M$. The separation axiom, in the definition of an embedding at the limit, implies that this map is injective.

\begin{example}
In Section~\ref{subsec:embs}, we proved separation of the embedding of $\cal A_\infty$ at the limit of $\cal A_{K_n}$, hence $\cal A_\infty$ injects in $\frak M$.
\end{example}

We will now put a topology on $\frak M$ via a basis of open sets. Recall that a basis for a topology on a set $X$ is a set of subsets of $X$, called \emph{base elements}, that satisfies the following two axioms:
\begin{enumerate}
\item base elements cover $X$,
\item for all points $x$ in the intersection $A$ of two base elements, there exists a base element containing $x$ and contained in $A$.
\end{enumerate}
Then we can define open sets as all unions, finite or infinite, of base elements.
The following collection of subsets $B$ of $\frak{M}$ satisfies the two axioms and is thus a basis for a topology $\cal T$ on $\frak{M}$:
given an open subset $U$ of $\I\times \R^d$ and a collection of maps $\phi_n: U\cap L_n\to \cal M_n$, let $B$ be the set of equivalence classes of $(b,U,(\phi_n)_{n>0})$ where $b$ varies in $U_\infty$. We now have a topology.

The set $B$ is open, by definition, and the map $U_\infty \to B: b\mapsto [(b,U,(\phi_n)_{n>0})]$ is a homeomorphism.
Therefore, these maps form an atlas on $\frak M$, whose change of coordinates are legal for the kind of manifold under study.

This atlas almost turns $\frak M$ it into a manifold: for this we have to drop, in the definition of a manifold, the following two requirements: Hausdorff separation axiom, and existence of a countable basis of neighborhoods (a.k.a.\ second countability). 

The space $\frak M$ will almost never be second countable.  
Concerning separation, it depends on the structure under study. For instance, for topological or for differentiable manifolds of dimension $>0$, $\frak M$ is not separated, which may seriously limit its usefulness.

\begin{proposition}\label{prop:2}
For $G$-manifolds\footnote{In the sense of \cite{Thu}} with $G$ a subgroup of the similitude group of $\R^d$, $\frak M$ is Hausdorff separated.
\end{proposition}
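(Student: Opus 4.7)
\textbf{Proof plan for Proposition~\ref{prop:2}.} The plan is to argue by contradiction: assume $p \neq p'$ are two virtual points in $\frak M$ that cannot be separated, and construct a legal change of coordinates witnessing $(a, U, (\phi_n)) \sim (a', U', (\phi'_n))$, contradicting $p \neq p'$. Fix representatives with $a = (0, x), a' = (0, x') \in L_\infty \cong \R^d$, and choose a basis of shrinking neighborhoods $V_k \ni a$, $V'_k \ni a'$. Non-separation yields, for every $k$, a virtual point $q_k$ in the intersection $B(V_k, (\phi_n|)) \cap B(V'_k, (\phi'_n|))$ together with representatives $(b_k, V_k, (\phi_n|))$ and $(b'_k, V'_k, (\phi'_n|))$ where $b_k = (0, x_k) \to a$ and $b'_k = (0, x'_k) \to a'$. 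The equivalence of these two representatives of $q_k$ yields open neighborhoods $W_k \ni b_k$, $W'_k \ni b'_k$ in $\I \times \R^d$ and a legal change $\psi^{(k)} : W_k \to W'_k$ with $\psi^{(k)}(b_k) = b'_k$ whose leafwise restrictions $\psi^{(k)}_n$ are restrictions of branches of $\phi'_n \circ \phi_n^{-1}$.

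The similitude hypothesis $G \subset \on{Sim}(\R^d)$ is essential. On each leaf $L_n$ the map $\psi^{(k)}_n$ is a similitude of an open subset of $\R^d$, and by the rigidity of similitudes (a similitude is uniquely determined by its values on any nonempty open set) it extends to a unique similitude $\Sigma^{(k)}_n : \R^d \to \R^d$. Continuity of $\psi^{(k)}$ in the topology of $\I \times \R^d$, equivalent to $\psi^{(k)}_n \to \psi^{(k)}_\infty$ uniformly on compact subsets, transmits to $\Sigma^{(k)}_n \to \Sigma^{(k)}_\infty$; the limit $\Sigma^{(k)}_\infty$ is a similitude of $L_\infty$ with $\Sigma^{(k)}_\infty(x_k) = x'_k$.

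The crux of the proof, and its main obstacle, is a \emph{stabilization} argument. For $n$ sufficiently large, the connected component of the overlap of the images of $\phi_n$ and $\phi'_n$ in $\cal M_n$ that contains $\phi_n(1/n, x)$ is well-defined, and for $k$ large enough (depending on $n$) it also contains $\phi_n(1/n, x_k)$. On this component the change of charts restricts to a single similitude $S_n$ of $L_n$, so $\Sigma^{(k)}_n = S_n$ for $k$ sufficiently large. Letting $n \to \infty$ gives $\Sigma^{(k)}_\infty = \lim_n S_n =: S$, independent of the choice of large $k$. The limit $S$ is genuinely a similitude because it coincides with the legal map $\psi^{(k)}_\infty$ on the open set $W_k \cap L_\infty$; in particular its scale factor neither vanishes nor diverges as $k$ varies, which is the precise pathology this argument rules out.

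With $(S_n)$ and $S$ in hand, we conclude: pick a small neighborhood $V$ of $a$ in $\I \times \R^d$ on which $S_n$ is defined for all $n \geq N$, and set $\psi_n := S_n|_{V \cap L_n}$ on each leaf and $\psi_\infty := S|_{V \cap L_\infty}$. Uniform convergence $S_n \to S$ on compact subsets provides the continuity of $\psi$ at $L_\infty$. Passing to the limit in $S(x_k) = x'_k$ yields $S(x) = x'$, so $\psi(a) = a'$, and $V' := \psi(V)$ is a neighborhood of $a'$. Hence $\psi$ witnesses $(a, U, (\phi_n)) \sim (a', U', (\phi'_n))$, so $p = p'$, the desired contradiction. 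The similitude hypothesis is essential throughout, via the rigidity of similitudes that produces the canonical leafwise $S_n$ and controls the limiting scale factor; for general topological or differentiable manifolds one has no such rigidity, which is exactly why, as noted just before the proposition, $\frak M$ can fail to be Hausdorff there.
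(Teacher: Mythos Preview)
The paper does not actually prove Proposition~\ref{prop:2}: it is left to the reader, with the remark that ``most of its difficulty will reside in unwrapping the statement.'' So there is no paper proof to compare against, and your approach must be judged on its own merits.

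Your overall strategy is sound and is surely what the author had in mind: exploit that elements of $G\subset\on{Sim}(\R^d)$ are determined by their restriction to any nonempty open set, extend each leafwise transition to a global similitude, and pass to the limit. The reduction to a contradiction and the use of nearby equivalent pairs $(x_k,x'_k)\to(x,x')$ are also correct.

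However, the \emph{stabilization} paragraph contains a genuine gap. You assert: ``For $n$ sufficiently large, the connected component of the overlap of the images of $\phi_n$ and $\phi'_n$ in $\cal M_n$ that contains $\phi_n(1/n,x)$ is well-defined.'' This presupposes that $\phi_n(1/n,x)$ lies in the overlap, i.e.\ that $x$ belongs to the domain $D_n$ of $(\phi'_n)^{-1}\circ\phi_n$ for all large $n$. But that is essentially the conclusion you are after: once a \emph{fixed} ball $B(x,\rho)\subset D_n$ eventually, the rest of your argument (identifying $\Sigma^{(k)}_n$ with a single $S_n$, convergence $S_n\to S$, and $S(x)=x'$) goes through cleanly. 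What you actually know from the equivalence at $x_k$ is only that $B(x_k,\delta_k)\subset D_n$ for $n>N_k$, with no control over $\delta_k$ or $N_k$; since $\delta_k$ may shrink faster than $|x_k-x|$, this does not yield $x\in D_n$, let alone a uniform neighbourhood. The subsequent claim ``for $k$ large enough (depending on $n$) it also contains $\phi_n(1/n,x_k)$'' inherits the same problem.

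A repair is possible but requires an extra idea. One route: fix $k=1$, obtain $\Sigma_n\to\Sigma_\infty$ with finite nonzero scale $\lambda_\infty$, and use that $U'$ contains a \emph{fixed} box $[0,\eta')\times B(x',\delta')$; then $\Sigma_n^{-1}(B(x',\delta'))$ is a ball of radius $\sim\delta'/\lambda_\infty$ centred near $\Sigma_\infty^{-1}(x')$. One must still argue that on this ball the inverse chart $\phi'_n\circ\Sigma_n$ coincides with $\phi_n$, and relate $\Sigma_\infty^{-1}(x')$ to $x$ (which needs the full family of $x_k$, not just $k=1$). Alternatively, mimic the paper's Propositions~\ref{prop:3}--\ref{prop:4}: it suffices that for each large $n$ \emph{either} a fixed ball about $x$ lies in $D_n$ \emph{or} a fixed ball about $x'$ lies in $D'_n$, and for similitudes this dichotomy can be obtained from the scale factor $\lambda_n$ being bounded or unbounded along subsequences. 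Either way, this is precisely the step where the work lies, and your current text skips it.
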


\begin{proposition}\label{prop:1}
For Riemann surfaces, $\frak M$ is Hausdorff separated.
\end{proposition}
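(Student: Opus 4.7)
My plan is to prove the contrapositive: if two virtual points $p = [(a, U, (\phi_n))]$ and $p' = [(a', U', (\phi'_n))]$ of $\frak M$ cannot be separated, then $p = p'$. After shrinking the representing neighborhoods I may assume $U = \I \times D$ and $U' = \I \times D'$ for small round complex disks $D \ni a$ and $D' \ni a'$. On each leaf $L_n$, set $\Phi_n := (\phi'_n)^{-1} \circ \phi_n$; it is a univalent holomorphic map from the open set $\Omega_n := \phi_n^{-1}(\phi'_n(D')) \subset D$ into $D'$.

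Non-separability means that for every pair of shrinking nested neighborhoods $V_k \subset U$ of $(0,a)$ and $V'_k \subset U'$ of $(0,a')$ with $\bigcap_k V_k = \{(0,a)\}$ and $\bigcap_k V'_k = \{(0,a')\}$, the corresponding base elements of $\frak M$ still meet. Unpacking this, I would extract a sequence of witnesses: points $b_k \to a$ and $b'_k \to a'$ in $L_\infty$, and legal changes of coordinates $\psi^{(k)} : W_k \to W'_k$ whose restriction to each leaf $L_n$ is a restriction of $\Phi_n$. Continuity of $\psi^{(k)}$ on $\I \times \C$ then gives uniform convergence $\Phi_n \to \psi^{(k)}_\infty$ on $W_k \cap L_\infty$, with $\psi^{(k)}_\infty(b_k) = b'_k$. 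Since each $\psi^{(k)}_n$ is a restriction of the single-valued map $\Phi_n$ on $L_n$, the limits $\psi^{(k)}_\infty$ for varying $k$ automatically agree on overlaps and glue into a univalent bounded holomorphic map $\Psi$ on the open set $\Omega := \bigcup_k (W_k \cap L_\infty) \subset D$, which contains the sequence $b_k$ accumulating at $a$.

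The final and crucial step is to upgrade this patchwise convergence to uniform convergence on a fixed neighborhood of $a$, thereby producing a legal change of coordinates $\tilde\psi$ near $(0,a)$ that leafwise restricts to $\Phi_n$ and whose limit leaf sends $a$ to $a'$. For this I would first show that $\Omega$ contains a full neighborhood of $a$ by a normal-families argument: on any compactum $K \subset D$ contained in $\Omega_n$ for $n$ large, the family $\{\Phi_n|_K\}$ is bounded (into $D'$), hence normal by Montel; any subsequential uniform limit must agree with $\Psi$ on $K \cap \Omega$ by the identity principle, so the convergence is in fact uniform on $K$ and extends $\Psi$ unambiguously. The continuity $b_k \to a$, $b'_k \to a'$ and boundedness of $\Psi$ then force $\Psi(a) = a'$, yielding the desired legal change.

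The main obstacle I anticipate is precisely in the last step: ruling out the pathology in which the convergence patches $W_k \cap L_\infty$ shrink so quickly as $k \to \infty$ that $\Omega$ fails to cover any fixed disk around $a$, equivalently that the domains $\Omega_n$ of $\Phi_n$ do not eventually contain a common disk around $a$. This is where the special rigidity of biholomorphisms --- univalent and bounded --- must enter, distinguishing Proposition~\ref{prop:1} from the $G$-manifold case of Proposition~\ref{prop:2}. A Koebe-type distortion and growth argument, or an appeal to Carathéodory kernel convergence for sequences of bounded univalent maps, should force the convergence patches to have uniformly bounded-below relative size and thus to cover a neighborhood of $a$, closing the argument.
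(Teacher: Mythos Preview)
You have correctly identified the crux of the problem: controlling the domains $\Omega_n$ of the transition maps $\Phi_n$. However, the resolution you sketch does not work, and the reason is structural rather than technical. Your target statement --- that $\Omega_n$ eventually contains a fixed disk about $a$ --- is simply \emph{false} in general, so no amount of Koebe distortion or Carath\'eodory kernel convergence can prove it. Consider the case where the conformal radii of $\phi_n(D)$ and $\phi'_n(D')$ oscillate wildly in relative size: when $\phi_n(D)$ is huge compared to $\phi'_n(D')$, the preimage $\Omega_n = \phi_n^{-1}(\phi'_n(D'))$ is a tiny set (possibly far from $a$), while the domain of $\Phi_n^{-1}$ is essentially all of $D'$; when the sizes are reversed, so are the roles. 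Neither side alone stabilises. Your normal-families step is also circular as written: it assumes the existence of a compactum $K \ni a$ that is eventually in $\Omega_n$, which is exactly the conclusion you seek.

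The paper's proof hinges on precisely this asymmetry. It first reduces (its Proposition~4) to a \emph{dichotomy}: for each large $n$, \emph{either} a fixed neighborhood of $a$ lies in the domain of $\Phi_n$, \emph{or} a fixed neighborhood of $a'$ lies in the domain of $\Phi_n^{-1}$. To establish the dichotomy, it looks at the connected component $A_n \subset D$ through $a'_n$ of the overlap, and the corresponding $B_n \subset D'$. If $A_n$ is simply connected and neither $A_n$ nor $B_n$ contained a fixed ball, then the harmonic measure (seen from $a'_n$, resp.\ $b'_n$) of the inner boundary $\partial A_n \cap D$ and of $\partial B_n \cap D'$ would each exceed $1/2$ by a Koebe-radius estimate; but $\Phi_n$ sends inner boundary of $A_n$ to outer boundary of $B_n$ and preserves harmonic measure, giving two disjoint sets of measure $>1/2$, a contradiction. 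The non--simply connected case forces $A_n$ and $B_n$ to be annuli sharing $\partial D$, $\partial D'$ as outer boundaries, and a separate argument using core curves and moduli is needed. Both of these ingredients --- the two-sided dichotomy and the harmonic-measure/annulus case split --- are missing from your outline and are not recoverable from generic univalent-function compactness.
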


Recall that second countability is automatic for connected Riemann surfaces satisfying the Hausdorff axiom.
Hence, for Riemann surfaces, every connected component of $\frak M$ is a Riemann surface! 

The proof of Proposition~\ref{prop:2} is much easier than the proof of Proposition~\ref{prop:1} and is left to the reader.\footnote{We do not claim it is trivial either, but most of its difficulty will reside in unwrapping the statement.} We included a proof of Propostion~\ref{prop:1} in Section~\ref{subsec:pfsep} 

A way to interpret Proposition~\ref{prop:1}, and our main motivation for proving it, is that, given any sequence of charts, with the assumption that their images in $\C$ contain a given common connected non empty open set $U$, we can associate to it a unique maximal limit Riemann surface, namely the component of $\frak M$ containing any (thus all) of the virtual points with basepoint in $U$ and associated to the sequence of charts.

Continuing to discuss the case of Riemann surfaces, note that each component of $\frak M$ is a well understood object: a Riemann surface, but the set of components is extremely big, as there are a lot of inequivalent sequences for virtual points, especially if we zoom on points, and inequivalent rates of zoom yield different virtual points. Classifying them is a complicated task whose outcome is maybe not so rewarding.

\medskip\noindent\textit{A word of caution:} The object $\frak M$ is not embedded at the limit of the $\cal M_n$, but it is in a weaker sense: one has to give up separation. Indeed, shall separation hold on $\frak M$ or not, separation will always fail on the union of $\frak M$ and the $\cal M_n$. Worse: even if we consider the embedding at the limit of $\cal M_n$ of only one component of $\frak M$, it may fail to be a separated embedding: an example is given by $\cal M_{2n}=\C$ and $\cal M_{2n+1}=\C/\Lambda$, where $\Lambda$ is a rank $2$ lattice. The component $X$ of $\frak M$ containing the virtual points associated to the canonical charts is isomorphic to $\C$. (We can also devise an example with all $\cal M_n=\C/\Lambda$, using charts with oscillating scales.)
Take $\cal M_\infty$ to be an open fundamental parallelogram for $\C/\Lambda$. Then with the same example as above, $\cal M_\infty$ is well-embedded, thus injects in a component $X$ of $\frak M$. However $X\simeq \C$ is only weakly embedded at the limit of $\cal M_ n$. I.e.\ one may have a good embedding in some subset, that fails to extend to a good embedding on a larger scale. Components of $\frak M$ that are well embedded at the limit of $\cal M_n$ should be considered as better behaved.

\begin{example} Let us work with affine Riemann surfaces and the associated notion of virtual points. In the case of the $\cal A_{K_n}$, the image of $\cal A_\infty$ was a whole component of $\frak M$, because $\cal A_\infty$ cannot be extended as an affine Riemann surface.
\end{example}

Let $\cal M_n$ and $\cal M'_n$ be two sequences of manifolds of the same kind, either Riemann surfaces or $G$-manifolds with $G$ a subgroup of the similitude group of $\R^d$. Let $\frak M$ and $\frak M'$ denote the respective space of virtual points. Assume that there are injective maps $\xi_n: \cal M_n \to \cal M'_n$, regular in the sense that, expressed in charts, they belong to the set of legal maps. This naturally defines a map $\Phi:\frak M \to \frak M'$ as follows: send a virtual point with representative $(a,U,(\phi_n))$, to the virtual point of representative $(a,U,(\xi_n\circ\phi_n))$. Then $\Phi$ is continuous and injective. In particular, if we let $\cal L$ be a connected component of $\frak M$, then $\xi_n$ converges to an injective holomorphic/$G$ map from $\cal L$ to the component $\cal L'$ containing $\Phi(\cal L)$.

\begin{example}
Let us now work with the notion of virtual points associated to Riemann surfaces instead of affine Riemann surface.
Then $\cal A_\infty$ also maps to a whole component of the space $\frak M$ associated to $\cal A_{K_n}$, but for another reason than in the affine case: though $\cal A_\infty$ can be completed into a Riemann sphere by adding $3$ points, the embedding at the limit cannot extend to any of these point.
Indeed let us consider the injections $\xi_n=\rho_n:\cal A_{K_n}\to \wh\C$, and the associated map $\Phi:\frak M\to \frak M'$ where $\frak M'$ is associated to the sequence $\cal M'_n=\wh\C$.
Using the canonical charts ($z$ and $1/z$) on $\wh \C=\cal M'_n$, we get a canonical component $\cal C$ of $\frak M'$ together with an isomorphism $\wh \C \to \cal C$.
Recall that $\C'$ refers to the complement of $\infty$ and of the limit of the $z_i(K_n)$. Let $\cal C'$ be the image of $\C'$ in $\cal C$.
Recall that the map from $\cal A_\infty$ to $\frak M$ injective. Let $\cal K_\infty\subset\frak M$ be the image of $\cal A_\infty$ and $\cal L$ be the component of $\frak M$ that contains $\cal K_\infty$. 
Then $\Phi(\cal L)$ has to be contained in $\cal C'$, by the same arguments as in Section~\ref{subsec:balm}. The restriction of $\Phi$ to $\cal K_\infty$ corresponds to $\rho_\infty$. 
But we saw that $\rho_\infty(\cal A_\infty)=\C'$. Since $\Phi$ is injective, this implies that $\cal L=\cal K_\infty$.
\end{example}

\subsection{Proof of separation of the space of virtual points, for Riemann surfaces}\label{subsec:pfsep}

\subsubsection{Preliminary lemmas} Let $\D$ denote the unit disk in $\C$.

\begin{lemma}\label{lem:1}
If $r<r_0=1/16$, then\footnote{The associated extremal problem seeks for a given $r\in(0,1)$ to minimize $m$ under the assumption that $0\in U$ and $B(0,r)\not\subset U$. If the extremal case is for $U=\D\setminus[r,1)$, then we can upgrade to $r_0=1/(3+\sqrt8)$.}
 for all simply connected open subsets $U$ of $\D$ that contains $0$ but does not contain $B(0,r)$, the harmonic measure $m$ as seen from $0$ of $\D\cap \partial U$ w.r.t.\ $U$, satisfies $m>1/2$.\end{lemma}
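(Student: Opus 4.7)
The plan is to reduce the estimate to Beurling's circular projection theorem applied to a radial slit, and then compute the slit's harmonic measure in closed form.

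First I would exploit the simple connectedness of $U$: since $\wh\C\setminus U$ is connected, the component of $\D\setminus U$ containing the missed point $p\in B(0,r)$ must accumulate on $\partial\D$, and its closure is therefore a continuum $K\subset\ov\D$ joining $p$ to $\partial\D$. Because $U\subset\D\setminus K$, the maximum principle on $U$, comparing the harmonic-measure function $\omega(\cdot,K,\D\setminus K)$ (with boundary values $\leq 1$ on $\D\cap\partial U$ and $=0$ on $\partial\D\cap\partial U$) with $\omega(\cdot,\D\cap\partial U,U)$ (boundary values $1$ and $0$ on the same pieces) gives
\[m=\omega(0,\D\cap\partial U,U)\ \geq\ \omega(0,K,\D\setminus K).\]

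Second, Beurling's projection theorem yields $\omega(0,K,\D\setminus K)\geq \omega(0,K^*,\D\setminus K^*)$, where $K^*=\{-|z|:z\in K\}\subset[-1,0]$. Since $K$ is connected and joins a point of modulus $\leq r$ to $\partial\D$, its range of moduli is an interval containing $r$ and $1$, so $K^*\supset[-1,-r]$; by the standard monotonicity of harmonic measure in the obstacle, this gives $\omega(0,K^*,\D\setminus K^*)\geq \omega(0,[-1,-r],\D\setminus[-1,-r])$.

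Third, I would compute this last quantity exactly by a chain of conformal maps. The Möbius map $z\mapsto(z+r)/(1+rz)$ sends $\D\setminus[-1,-r]$ onto $\D\setminus[-1,0]$ with $0\mapsto r$; the principal square root sends this to the right half-disk $H=\{|w|<1,\ \Re w>0\}$ with $r\mapsto\sqrt r$; the involution $w\mapsto(1-w)/(1+w)$ maps $H$ to itself interchanging the imaginary diameter with the right semicircle, sending $\sqrt r$ to $x_0=(1-\sqrt r)/(1+\sqrt r)$, which satisfies $(1+x_0)/(1-x_0)=1/\sqrt r$. One thus needs the harmonic measure in $H$ from $x_0$ of the right semicircle; odd Schwarz reflection across the imaginary diameter rewrites this as $2\,\omega(x_0,\text{right semicircle},\D)-1$, and a short Poisson-integral computation (substitution $t=\tan(\theta/2)$) gives $\omega(x,\text{right semicircle},\D)=(2/\pi)\arctan\bigl((1+x)/(1-x)\bigr)$ for $x\in(0,1)$. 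Altogether,
\[\omega(0,[-1,-r],\D\setminus[-1,-r])\ =\ \frac{4}{\pi}\arctan\!\left(\frac{1}{\sqrt r}\right)-1.\]

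Finally, for $r<1/16$ one has $1/\sqrt r>4>1+\sqrt 2=\tan(3\pi/8)$, so $\arctan(1/\sqrt r)>3\pi/8$ and the above expression exceeds $1/2$; chaining the inequalities yields $m>1/2$. The main technical ingredient is Beurling's projection theorem; the rest is elementary conformal mapping. The improved threshold $r_0=(\sqrt 2-1)^2=1/(3+\sqrt 8)$ mentioned in the footnote is precisely the value making $(4/\pi)\arctan(1/\sqrt r)=3/2$, attained by the extremal slit configuration $U=\D\setminus[r,1)$, which is essentially a fixed point of the reductions above.
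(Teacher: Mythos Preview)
Your argument is correct. The reduction to a radial slit via Beurling's projection theorem and the closed-form evaluation $\omega(0,[-1,-r],\D\setminus[-1,-r])=(4/\pi)\arctan(1/\sqrt r)-1$ are both sound, and your chain of conformal maps checks out. (The extraction of the continuum $K\subset\ov\D$ joining $p$ to $\partial\D$ from simple connectivity of $U$ is the standard boundary-bumping argument.)

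The paper, however, takes a much shorter and quite different route, avoiding both Beurling's theorem and any explicit slit computation. It uses the identity $\log r_c=\int_{\partial U}\log|z|\,d\mu$ for the conformal radius $r_c$ of $U$ at $0$: since $\log|z|=0$ on $\partial U\cap\partial\D$, while on $\partial U\cap\D$ one has $\log|z|\geq\log\rho$ with $\rho=d(0,\partial U)<r$, this gives $\log r_c\geq m\log\rho$, hence $m\geq\log(1/r_c)/\log(1/\rho)$. Koebe's one-quarter theorem then yields $r_c\leq 4\rho$, so $m\geq 1-\log4/\log(1/\rho)>1/2$ as soon as $\rho<1/16$. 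Thus the paper trades your two substantial ingredients (Beurling and the slit computation) for two much lighter ones (the Green's-function identity for $r_c$ and Koebe), obtaining the result in a few lines. What your approach buys in return is sharpness: your explicit formula makes it transparent that the radial slit $\D\setminus[r,1)$ is extremal and pins down the threshold $r_0=1/(3+\sqrt8)$ that the paper only mentions in the footnote.
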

\begin{proof}
The conformal radius $r_c$ of $U$ w.r.t. $0$ satisfies $\log r_c =\int \log|z| d\mu(z)$ where $z$ varies in $\partial U$ and $\mu$ is the harmonic measure. Since on $\partial \D$, $\log|z|=0$, we get $\log r_c \geq m \log r$. Both sides being negative, $m\geq \log r_c / \log r = \log (1/r_c) / \log (1/r)$.
By Koebe's 1/4 theorem, $r_c \leq 4 r$, hence $m\geq 1 - \log 4/\log (1/r)$.  
\end{proof}

Recall that the core geodesic of an annulus of finite modulus $m$ is the image of the equator $\R/\Z$ by the conformal representation to the following bounded cylinder: $``|\Im(z)|<m/2"/\Z$.

\begin{prelemma}\label{prelem:0}
Consider an annulus $U$ contained in $\D$ and with one boundary curve being $\partial \D$. Call $\cal C$ its core curve. Let $U'=U\cup\partial \D\cup s(U)$ where $s$ is the symmetry w.r.t.\ $\partial \D$. Then any point in one of the two curves $\partial \D$ and $\cal C$ is at a distance $d_0$ from the other curve that is independent of $U$. One computes $d_0=\log(1+\sqrt 2)/2$.
\end{prelemma}
\begin{proof}
The uniformization from $U$ to the subset $\Im(z)\in\left]0,m\right[$ of $\C/\Z$ admits a Schwarz reflection that is a uniformization from $U'$ to the sub-cylinder $C_m=\Im(z)\in\left]-m,m\right[$ of $\C/\Z$.
It sends the hyperbolic metric of $U'$ to that of $C_m$ and it maps the two curves to $\Im(z)=0$ and $\Im(z)=m/2$. By a symmetry of vertical axis, the shortest path in $C_m$ from a point of one the horizontal curves to the other is a straight vertical segment.
As a cover, the natural map $\C\to\C/\Z$ preserves the hyperbolic metric. The sub-cylinder $C_m$ becomes a strip $\Im(z)\in\left]-m,m\right[$ in $\C$, and the two closed curves become straight lines $\Im(z)=0$ and $\Im(z)=m/2$.
Rescaling the picture by $1/m$, also preserves the respective hyperbolic metrics, and the picture then becomes independent of $U$ and of $m$.
\end{proof}

The value of $d_0$ is not important here.

\begin{lemma}\label{lem:2}
For all $\eta>0$ there exists\footnote{Here too, it is likely that the extremal case is the complement of a segment: $U=\D\setminus[0,\eta]$.} $\epsilon>0$ such that for all annulus contained in $\D$ with one boundary component equal to $\partial\D$, not containing $0$, and such that the closest point to $0$ of the core curve is at distance $<\epsilon$, then the bounded component of the complement of the annulus is contained in $B(0,\eta)$.
\end{lemma}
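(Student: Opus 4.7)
Plan. I would argue by contradiction: suppose that $\mathrm{dist}(0,\cal C)<\epsilon$ yet $K$ contains a point $p$ with $|p|\geq \eta$. Because $|z|$ attains its maximum over the compact set $K$ on the boundary, we may assume $p\in\Gamma:=\partial K$. Let $V$ denote the Jordan interior of $\cal C$: it is a simply connected subdomain of $\D$, it contains $K$ (hence $0$ and $p$), and since some point of $\cal C$ has modulus $<\epsilon$, $V$ fails to contain that point of $B(0,\epsilon)$. Koebe's $1/4$ theorem then gives $\mathrm{cr}(V,0)\leq 4\epsilon$. Let $\Psi:\D\to V$ be the Riemann uniformization normalized by $\Psi(0)=0$, so $|\Psi'(0)|=\mathrm{cr}(V,0)\leq 4\epsilon$. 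Applying the Koebe distortion inequality $|\Psi(w)|\leq |\Psi'(0)|\cdot|w|/(1-|w|)^{2}$ at $w_p:=\Psi^{-1}(p)$ and using $|p|\geq\eta$ forces
\[
|w_p|\;\geq\;1-2\sqrt{\epsilon/\eta}.
\]

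Next, the inner sub-annulus $A_1:=V\setminus K$ has modulus $\mu/2$, where $\mu:=\mathrm{mod}(U)$. Its pullback $\Psi^{-1}(A_1)=\D\setminus\Psi^{-1}(K)$ is a plane annulus in $\D$ of the same modulus, whose bounded complementary component $\Psi^{-1}(K)$ is a continuum containing $0$ and $w_p$. The Grötzsch--Teichmüller extremal modulus theorem therefore yields
\[
\mu/2\;\leq\;\mu_G\bigl(1-2\sqrt{\epsilon/\eta}\bigr),
\]
where $\mu_G$ is the Grötzsch modulus function. The right-hand side tends to $0$ as $\epsilon\to 0$ with $\eta$ fixed, so $\mu$ can be made arbitrarily small.

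To extract a contradiction I would invoke the Pre Lemma: the core $\cal C$ sits at (constant) hyperbolic distance $d_0$ from $\partial\D$ in the doubled annulus $U'$. Uniformizing $U'$ to the cylinder $\{|\Im z|<\mu\}/\Z$, the hyperbolic length of $\partial\D$ inside $U'$ equals $\pi/(2\mu)$, which blows up as $\mu\to 0$; combined with the universal bound $\rho_{U'}(z)\leq 2/\mathrm{dist}(z,\partial U')$ this forces some point of $\partial\D$ to satisfy $\mathrm{dist}(z,\partial U')\to 0$. Since $\partial U'=\Gamma\cup s(\Gamma)$ and $\mathrm{dist}(z,s(\Gamma))\geq\mathrm{dist}(z,\Gamma)$ for $z\in\partial\D$, the set $\Gamma$ must accumulate onto $\partial\D$ in the Euclidean sense.

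The hardest step, where I expect the main obstacle, is to upgrade these two point-wise Euclidean estimates (one point of $\cal C$ within $\epsilon$ of $0$ and one point of $\Gamma$ close to $\partial\D$) into the uniform statement $\max_{z\in\Gamma}|z|<\eta$. I would handle this by applying Lemma~1 to a simply connected subdomain of $\D$ obtained by slitting $\D$ along a suitable arc from $\partial\D$ into $V$ — chosen so that the alleged ``far'' point $p\in\Gamma$ with $|p|\geq\eta$ lies on its proper boundary and the resulting domain contains $0$ but still omits $B(0,\epsilon)$. The harmonic-measure lower bound furnished by Lemma~1 (together with the refined estimate $m\geq 1-\log 4/\log(1/\epsilon)$ from its proof) then combines with the Grötzsch modulus bound above to yield a quantitative inequality between $\epsilon$ and $\eta$ which is violated for $\epsilon$ chosen small enough (depending only on $\eta$), giving the required contradiction.
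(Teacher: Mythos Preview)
Your steps (1)--(7) are correct: assuming the core curve meets $B(0,\epsilon)$ while $K$ contains a point $p$ with $|p|\ge\eta$, you legitimately obtain $\mu/2\le \mu_G\bigl(1-2\sqrt{\epsilon/\eta}\bigr)$, hence $\mu\to 0$ as $\epsilon\to 0$. Step (8) is also correct. But neither conclusion yields a contradiction. You have produced \emph{upper} bounds on $\mu$, yet nowhere a \emph{lower} bound to violate; and the fact that $\Gamma$ accumulates on $\partial\D$ is perfectly compatible with the hypothesis $|p|\ge\eta$ --- indeed it points the wrong way, since you ultimately want $\Gamma\subset B(0,\eta)$. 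Your last paragraph acknowledges this gap and proposes slitting $\D$ and invoking Lemma~1, but the sketch is too vague: Lemma~1 would give a lower bound on the harmonic measure of the slit, while your Gr\"otzsch estimate bounds the modulus $\mu$; I do not see how these two quantities combine into an inequality that fails for small $\epsilon$.

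The paper's argument is quite different and bypasses $\mu$ entirely. It passes to logarithmic coordinates $z\mapsto \log(z)/(2\pi i)$, sending $U'$ to a domain $A'\subset\C/\Z$ and $\partial\D$ to $\R/\Z$. The single point $p\in K$ with $|p|\ge\eta$ maps to a point at height $h\le -\log\eta/(2\pi)$ in $\C/\Z\setminus A'$; by $1$-periodicity and connectedness of the complement, \emph{every} $z\in A'$ then satisfies $\on{dist}(z,\partial A')\le \sqrt{h^2+1/4}$. Koebe's $1/4$ theorem on the cover $\C\to\C/\Z$ converts this into a uniform lower bound $\rho_0=1/\bigl(4\sqrt{h^2+1/4}\bigr)$ on the hyperbolic density of $A'$. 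Since the core curve is at fixed hyperbolic distance $d_0$ from $\R/\Z$ (Pre~Lemma), it lies at Euclidean height $\le d_0/\rho_0$ in $\C/\Z$, i.e.\ at modulus $\ge e^{-2\pi d_0/\rho_0}=:\epsilon(\eta)$ in $\D$. This is the required bound.

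The idea you are missing is precisely this logarithm: it turns one bad point $p$ into a periodic family, upgrading a pointwise datum into a uniform estimate on the hyperbolic metric. Your route through $\Psi$, Koebe distortion and Gr\"otzsch never gains such uniformity, which is why the argument stalls.
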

\begin{proof} (We are not trying to get optimal bounds.) 
Let $U$ be the annulus and $U'$ as in Pre~Lemma~\ref{prelem:0}. 
Consider their images $A$ and $A'$ in $\C/\Z$ by $z\mapsto \log(z)/2\pi i$.
Koebe's one quarter theorem applied to a lift by the cover $\C\to\C/\Z$ implies that the coefficient $\rho(z)$ of the hyperbolic metric $\rho(z)|dz|$ of $A'$ is $>1/4d_e(z,\C\setminus A ')$ where $d_e$ denotes the euclidean distance.
Now if there is a point at height $h>0$ and not in $A$, then all points in $A'$ are at distance at most $\sqrt{h^2+(1/2)^2}$ from $\C\setminus A'$, (points above $h$ are at distance $\leq 1/2$ because $\C\setminus A'$ is connected and $1$-periodic), thus we get a lower bound on the coefficient: $\rho(z)>\rho_0=1/4\sqrt{h^2 + 1/4}$ whose precise value is not important either. Points in the core curve of $A$ cannot be farther than $d_0/\rho_0$ from $\R/\Z$ for the euclidean distance in $\C/\Z$.
\end{proof}

As an almost immediate corollary:

\begin{corollary}\label{cor}
For all $\eta>0$ there exists $\epsilon>0$ such that for all annulus $A$ contained in $\D$ with one boundary component equal to $\partial\D$, and such that $B(0,\epsilon)$ contains a point in $\D\setminus A$ and a point in the core curve, then $\D\setminus A\subset B(0,\eta)$.
\end{corollary}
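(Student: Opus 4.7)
The only difference between Corollary~\ref{cor} and Lemma~\ref{lem:2} is that Lemma~\ref{lem:2} assumes $0 \in \D \setminus A$, whereas here we only have a point $p \in B(0,\epsilon) \cap (\D \setminus A)$ and a point $q \in B(0,\epsilon)$ on the core curve. The plan is to reduce to Lemma~\ref{lem:2} by recentering: apply the disk automorphism
\[\phi(z) = \frac{z-p}{1 - \bar p\, z},\]
which sends $p$ to $0$ and preserves $\partial\D$, and then invoke Lemma~\ref{lem:2} on $\phi(A)$.

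The image $\phi(A)$ is again an annulus in $\D$ with one boundary component equal to $\partial\D$, and it avoids $0$ because $p\notin A$. Since the core geodesic is a conformal invariant, the core curve of $\phi(A)$ is the $\phi$-image of the core of $A$, and therefore contains $\phi(q)$. The standard Möbius estimate
\[|\phi(q)| \leq \frac{|p|+|q|}{1-|p|\,|q|} \leq \frac{2\epsilon}{1-\epsilon^2}\]
makes $\phi(q)$ arbitrarily close to $0$ when $\epsilon$ is small, so that the hypothesis of Lemma~\ref{lem:2} holds for $\phi(A)$ after suitable calibration.

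Concretely, given $\eta>0$, I would fix an auxiliary $\eta' \leq \eta/2$, apply Lemma~\ref{lem:2} to obtain the corresponding $\epsilon'>0$, and then choose $\epsilon>0$ small enough so that (i) $2\epsilon/(1-\epsilon^2)<\epsilon'$, which ensures Lemma~\ref{lem:2} applies to $\phi(A)$ and forces the bounded component of $\C\setminus \phi(A)$ to lie in $B(0,\eta')$; and (ii) $\phi^{-1}(B(0,\eta')) \subset B(0,\eta)$, which follows from the symmetric bound $|\phi^{-1}(z)| \leq (|p|+|z|)/(1-|p|\,|z|)$, uniformly for $p\in B(0,\epsilon)$ and $z\in B(0,\eta')$. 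Since $\phi$ is a disk automorphism, it carries the bounded component of $\C\setminus A$ onto that of $\C\setminus\phi(A)$; pulling back then yields $\D\setminus A \subset B(0,\eta)$, as required. The main step is the invocation of Lemma~\ref{lem:2}; the rest is constant-bookkeeping for the two Möbius estimates, hence the ``almost immediate'' in the text, and no serious obstacle is to be expected.
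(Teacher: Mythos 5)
Your proof is correct and follows the route the paper intends: the corollary is stated without proof as ``almost immediate'' from Lemma~\ref{lem:2}, and your Möbius recentering $z\mapsto(z-p)/(1-\bar p z)$, together with the conformal invariance of the core geodesic and the two uniform distortion estimates, is exactly the reduction that makes this precise. All hypotheses of Lemma~\ref{lem:2} are verified for $\phi(A)$ (in particular $0=\phi(p)\notin\phi(A)$ since $p\in\D\setminus A$), and the constant bookkeeping is uniform in $p\in B(0,\epsilon)$ as you note, so there is nothing to add.
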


Last:

\begin{lemma}\label{lem:3}
For all $\eta>0$ there exists $\epsilon>0$ such that for all annulus $U$ contained in $\D$ with one boundary component equal to $\partial\D$, satisfying $\D\setminus U\subset \ov B(0,\epsilon)$ then the core curve of $\D$ is contained in $\ov B(0,\eta)$.
\end{lemma}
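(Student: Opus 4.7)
The plan is to transport the annulus into a cylinder via the logarithm, use the Schwarz-reflection doubling of Pre~Lemma~\ref{prelem:0} to describe the core curve as lying at hyperbolic distance exactly $d_0$ from $\R/\Z$, and then convert that hyperbolic bound into a Euclidean one by comparing with an explicit Euclidean strip of large modulus.

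First I apply $w=\log z/(2\pi i)$. The annulus $U$ maps to an annulus $A\subset\C/\Z$ whose lower boundary arc is $\R/\Z$ (coming from $\partial\D$) and whose upper boundary lies in $\{\Im w>0\}$; its Schwarz double $U'$ from Pre~Lemma~\ref{prelem:0} maps to an annulus $A'\subset\C/\Z$ symmetric about $\R/\Z$. The hypothesis $\D\setminus U\subset\ov B(0,\epsilon)$ gives $U\supset\{\epsilon<|z|<1\}$, so after doubling $A'$ contains the Euclidean cylinder $S:=\{|\Im w|<h\}/\Z$, where $h:=\log(1/\epsilon)/(2\pi)$ is large when $\epsilon$ is small.

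Next I exploit Pre~Lemma~\ref{prelem:0}: every point of the core curve $\cal C$ of $U$ (interpreting ``the core curve of $\D$'' in the statement as the core curve of $U$) lies at hyperbolic $A'$-distance $d_0$ from $\R/\Z$. Since $S\subset A'$, domain monotonicity gives $d_S(z,\R/\Z)\ge d_{A'}(z,\R/\Z)=d_0$ for $z\in\cal C\cap S$. The universal cover of $S$ is the strip $\widetilde S=\{|\Im w|<h\}$ with explicit hyperbolic metric $\rho_{\widetilde S}(w)=\pi/(2h\cos(\pi\Im w/(2h)))$; this is invariant under real translations and so descends to $S$, and vertical rays are geodesics. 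Integrating gives
\[
d_S(z,\R/\Z)=\ln\tan\!\left(\tfrac{\pi}{4}+\tfrac{\pi|\Im z|}{4h}\right).
\]
Requiring this to be at least $d_0$ yields $|\Im z|\ge c\,h$ with $c:=4(\arctan e^{d_0}-\pi/4)/\pi\in(0,1)$, a constant depending only on $d_0$.

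Finally I combine the two regions of $\cal C$: on $\cal C\cap S$ the previous step gives $\Im w\ge c\,h$ (since $\cal C\subset A$ sits in the upper half of $\C/\Z$), while on $\cal C\setminus S$ one automatically has $\Im w\ge h\ge c\,h$. Hence $\Im w\ge c\,h$ on all of $\cal C$, which in the original coordinates reads $|z|=e^{-2\pi\Im w}\le \epsilon^{c}$. Choosing $\epsilon$ so that $\epsilon^{c}\le\eta$ delivers the lemma. The main thing to keep honest is the conversion of the hyperbolic distance $d_0$ into a Euclidean height proportional to $h$; the explicit hyperbolic metric on the strip, together with domain monotonicity, makes this passage completely transparent.
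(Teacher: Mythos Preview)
Your argument is correct and follows essentially the same route as the paper: double the annulus, invoke Pre~Lemma~\ref{prelem:0} to place the core curve at hyperbolic distance exactly $d_0$ from $\partial\D$, and then use monotonicity against a round sub-annulus to convert this into a Euclidean bound. The only difference is cosmetic: where you pass to the cylinder and integrate the explicit strip metric $\pi/(2h\cos(\pi\Im w/(2h)))$, the paper stays in the disk, takes the round annulus $V=\D\setminus\ov B(0,\eta^2)$ as the comparison object, and applies Pre~Lemma~\ref{prelem:0} a \emph{second} time (to $V$) to avoid any explicit computation---a radial segment from a point with $|z|>\eta$ to $\partial\D$ then has $V'$-length strictly less than $d_0$, contradicting the first application. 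Both arguments produce the same constant $\epsilon=\eta^2$ (your $c$ equals $1/2$ once one plugs in $d_0=\log(1+\sqrt 2)$ in your normalization), so neither approach buys anything quantitatively over the other; the paper's version is just a bit shorter.
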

\begin{proof}
Let $V=\D\setminus\ov{B}(0,\eta^2)$. Its core curve is $\partial B(0,\eta)$.
Here it is easy to see that the extremal case is given by $U=V$. Let $U$ satisfy the asumptions with $\epsilon=\eta^2$.
Consider the annulus $U' = U \cup \partial \D\cup s(U)$ where $s$ is the symmetry w.r.t. $\partial \D$, and define $V'$ analogously. Then $V'\subset U'$ thus the hyperbolic metric of $U'$ is $\leq$ the hyperbolic metric of $V'$.
If there were a point in the core curve of $U'$ and not belonging to $\ov B(0,\eta)$, then the straight ray from that point to $\partial \D$ would be shorter for the hyperbolic metric in $U'$, than the distance $d_0$ in $V'$ from $\partial \D$ to $\partial B(0,\eta)$.
However, by Pre~Lemma~\ref{prelem:0}, $d_0$ is also equal to the distance in $U'$ from the core curve of $U$ to $\partial \D$, which leads to a contradiction. 
\end{proof}

\subsubsection{}

Given a hyperbolic open subset $U$ of $\C$, we will denote by $d_U(x,y)$ the hyperbolic metric on $U$ and by $B_U(z,r)$ the ball of center $z$ and radius $r$ for this metric.

Consider two virtual points $x$, $y$ and assume that all neighborhoods intersect. Let us prove that $x=y$. Let $(a,U,(\phi_n))$ be a representative of $x$ and $(b,V,(\psi_n))$ of $y$. We have to prove that there is a neighborhood of $a$ eventually contained in the domain of $\psi_n^{-1}\circ\phi_n$ and such that the latter converges on the neighborhood to an injective map sending $a$ to $b$.

\begin{figure}
\begin{tikzpicture}
\node at (0,0) {\includegraphics[width=9cm]{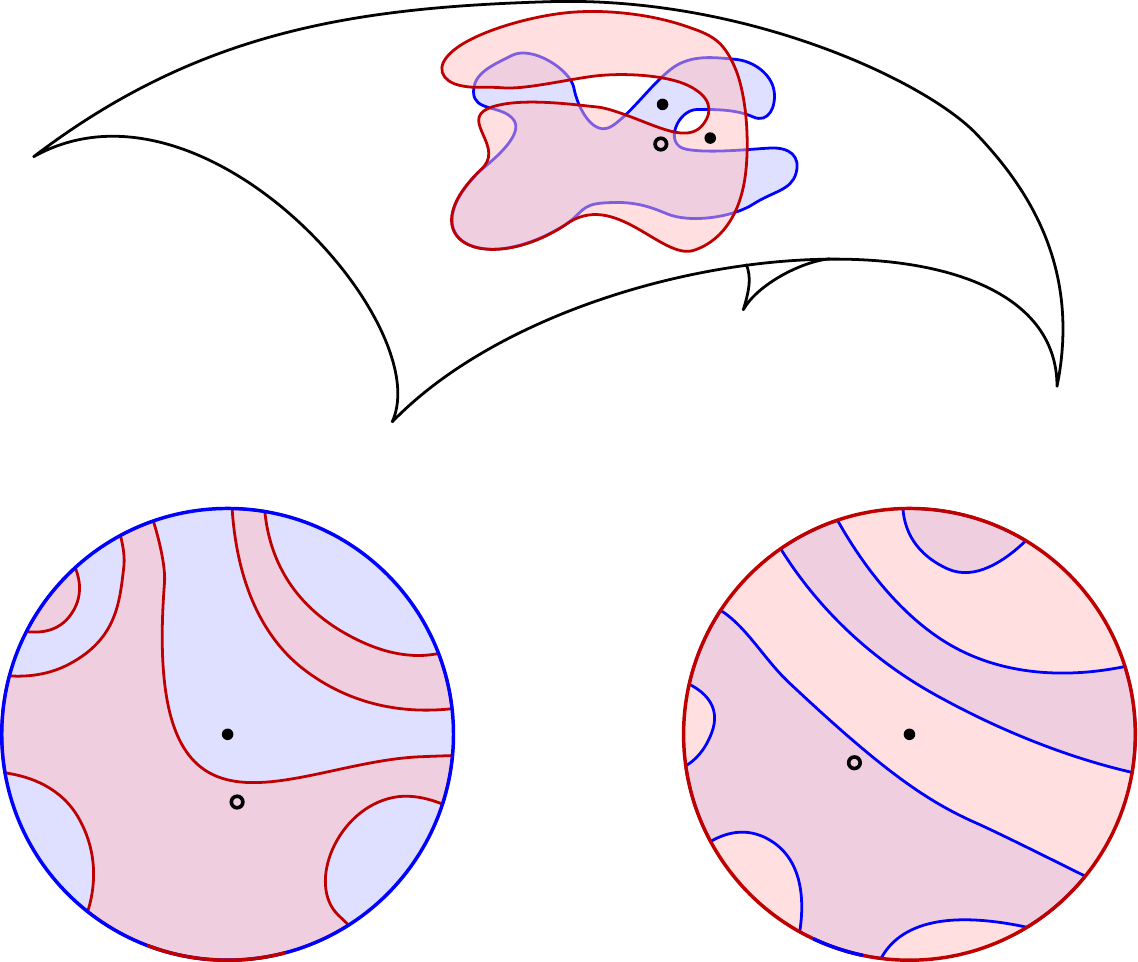}};
\node at (0.3,0.9) {$\cal M_n$};
\node at (-2.45,-2) {$a$};
\node at (-2.27,-2.55) {$a'_n$};
\node at (2.9,-2) {$b$};
\node at (2,-2.25) {$b'_n$};
\node at (3.9,0) {$B(b,r)$};
\node at (-3.9,0) {$B(a,r)$};
\draw[->] (-1.8,-0.4) -- node[left] {$\phi_n$} (-0.7,1.8);
\draw[->] (2,-0.3) -- node[right] {$\psi_n$} (1.1,1.8);
\draw[-] (-1.3,-3.6) node[right]{$A_n$} -- (-2.7,-3.3);
\draw[-] (1.3,-3.6) node[left]{$B_n$} -- (2.3,-3.2);
\end{tikzpicture}
\caption{Sketch of some objects in the proof of separation of $\frak M$ for Riemann surfaces. We want to prove in particular that $a$ or $b$ belongs to $A_n$ or $B_n$ and is well within. In this picture, neither does.}
\label{fig:sep}
\end{figure}

By hypothesis, for all $\epsilon>0$, there are points $a'\in B(a,\epsilon)\cap U_\infty$ and $b' \in B(b,\epsilon)\cap V_\infty$, such that $(a',U,(\phi_n))\sim (b',V,(\psi_n))$, i.e.\ the maps $\psi_n^{-1}\circ\phi_n$ converge in a neighborhood of $a'$ to an injective holomorphic map that sends $a'$ to $b'$.
\begin{proposition}\label{prop:3}
It is enough to prove that there is a neighborhood of $a$ eventually contained in the domain of $\psi_n^{-1}\circ\phi_n$. Similarly, it is enough to prove that there is a neighborhood of $b$ eventually contained in the domain of $\phi_n^{-1}\circ\psi_n$.
\end{proposition}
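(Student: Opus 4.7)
The plan is to show that the domain hypothesis alone already forces $h_n := \psi_n^{-1}\circ\phi_n$ to converge locally uniformly on a neighborhood of $a$ to a univalent holomorphic map sending $a$ to $b$; that convergence assembles into a legal change of coordinates in $\I\times\R^d$, which by definition of the equivalence relation witnesses $x=y$. Let me write $a=(0,a_0)$, $b=(0,b_0)$, and let $W=B(a_0,r)\Subset U_\infty$ be a disk such that for every $n\geq N$ the map $h_n$ is defined and holomorphic (hence injective, as a composition of inverses of charts) on $W$, viewed inside $L_n$ via the canonical projection.

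By the hypothesis that every pair of neighborhoods of $x$ and $y$ in $\frak M$ meet, for each $\epsilon>0$ we can pick $a'_\epsilon\in W\cap B(a_0,\epsilon)$ and $b'_\epsilon\in B(b_0,\epsilon)$ such that $h_n$ converges locally uniformly in some (possibly small) neighborhood $W_\epsilon$ of $a'_\epsilon$ to an injective holomorphic map $g_\epsilon$ with $g_\epsilon(a'_\epsilon)=b'_\epsilon$. Thus the univalent sequence $\{h_n\}_{n\geq N}$ on the connected open set $W$ has a finite (and non-constant!) subsequential limit on a genuine open subset of $W$.

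The key step is a normality argument for univalent maps. The classical trichotomy says that a sequence of holomorphic injections on a connected open set has a subsequence that either converges locally uniformly to a univalent map, converges locally uniformly to a constant, or diverges uniformly to infinity on every compact. The third alternative is impossible because $h_n(a'_\epsilon)\to b'_\epsilon\in\C$ is bounded; the second is impossible because any subsequential limit must agree with the non-constant $g_\epsilon$ on $W_\epsilon$. So every subsequence has a further subsequence converging locally uniformly on $W$ to a univalent function $\tilde g$ extending $g_\epsilon$. Since such a $\tilde g$ is uniquely determined by $g_\epsilon$ via analytic continuation, the full sequence $h_n$ converges locally uniformly on $W$ to a single univalent limit $\tilde g$.

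Because $\tilde g$ does not depend on the choice of $\epsilon$, we have $\tilde g(a'_\epsilon)=b'_\epsilon$ for every $\epsilon$; letting $\epsilon\to 0$ and using continuity of $\tilde g$ at $a_0$ gives $\tilde g(a_0)=b_0$. The map on the open tube $\{(1/n,z):z\in W,\,n\geq N\}\cup\{(0,z):z\in W\}$ defined by $h_n$ on each finite leaf and by $\tilde g$ on $L_\infty$ is holomorphic (hence legal) on each leaf; joint continuity at points of $L_\infty$ follows from the local uniform convergence $h_n\to \tilde g$. Suitably restricting to open neighborhoods of $a$ and $b=\tilde g(a)$ in $\I\times\R^d$, this is a legal change of coordinates realizing $(a,U,(\phi_n))\sim(b,V,(\psi_n))$, i.e.\ $x=y$. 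The symmetric statement for a neighborhood of $b$ and the sequence $\phi_n^{-1}\circ\psi_n$ is handled identically after interchanging the roles of the two representatives. The main obstacle is exactly the trichotomy step: one must rule out the univalent family escaping locally to infinity somewhere on $W$ while being well-behaved near $a'_\epsilon$, which is precisely what the local convergence hypothesis prevents.
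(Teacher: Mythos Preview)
Your proof is correct and follows essentially the same route as the paper: use the non-separation hypothesis to find nearby points $a',b'$ where $h_n=\psi_n^{-1}\circ\phi_n$ already converges, invoke compactness properties of univalent families to propagate convergence from a neighborhood of $a'$ to the whole disk $W$, and then let $a'\to a$, $b'\to b$ to identify the value of the limit at $a$. The paper compresses the normality step into the single phrase ``by properties of univalent maps, convergence near $a'$ implies convergence on all $B(a,\epsilon)$,'' whereas you spell out the trichotomy and the subsequence-of-subsequence uniqueness argument; but the mathematical content is the same.
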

\begin{proof}
The second claim follows from the first by symmetry, using the fact that if $\phi_n^{-1}\circ\psi_n$ converges near $b$ to an injective map sending $b$ to $a$, recalling that those maps are holomorphic, then $\psi_n^{-1}\circ\phi_n$ converges near $a$ to an injective map sending $a$ to $b$.

Under the hypothesis of the first claim, consider a disk $B(a,\epsilon)$ contained in the neighborhood. Let $a'$ be as above. For $n$ big enough, the restriction of $\phi_n^{-1}\circ\psi_n$ is defined on $B(a,\epsilon)$ an univalent.
By properties of univalent maps, convergence near $a'$ implies convergence on all $B(a,\epsilon)$ to a univalent map. Moreover the limit maps $a'$ to $b'$. Now taking other pairs $a'$, $b'$ that are arbitrarily close to $a$ and $b$, we deduce by continuity that the limit sends $a$ to $b$.
\end{proof}

\begin{proposition}\label{prop:4}
It is enough to prove that there are neighborhoods $V$ of $a$ and $W$ of $b$ such that for all $n$ big enough, either $V$ is contained in the domain of $\psi_n^{-1}\circ\phi_n$, or $W$ is contained in the domain of $\phi_n^{-1}\circ\psi_n$.
\end{proposition}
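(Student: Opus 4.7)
The plan is to reduce Proposition~\ref{prop:4}'s hypothesis to Proposition~\ref{prop:3}'s. Partition the large indices by which alternative of Proposition~\ref{prop:4} holds: set $N_1=\setof{n}{V\subset\on{Dom}(\psi_n^{-1}\circ\phi_n)}$ and $N_2=\setof{n}{W\subset\on{Dom}(\phi_n^{-1}\circ\psi_n)}$, so $N_1\cup N_2$ is cofinite by assumption. If $N_1$ itself is cofinite we conclude by direct application of Proposition~\ref{prop:3}, so from now on assume $N_2$ is infinite. The goal is to produce a fixed $\eta>0$ so that $B(a,\eta)\subset\on{Dom}(\psi_n^{-1}\circ\phi_n)$ for every sufficiently large $n\in N_1\cup N_2$; this is exactly the hypothesis of Proposition~\ref{prop:3}.

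Recall the points $a'$ close to $a$ and $b'$ close to $b$ for which $\psi_n^{-1}\circ\phi_n$ converges near $a'$ to an injective holomorphic map sending $a'$ to $b'$. By inversion, $\phi_n^{-1}\circ\psi_n$ converges near $b'$ to an injective holomorphic map. For $n\in N_2$ the map $\phi_n^{-1}\circ\psi_n$ is univalent on the connected set $W$; since its derivative at $b'$ converges to a nonzero limit, Koebe's distortion bounds make the family uniformly bounded on compact subsets of $W$. Montel plus the identity principle then force the whole subsequence $(\phi_n^{-1}\circ\psi_n)_{n\in N_2}$ to converge, uniformly on compacts of $W$, to a univalent map $g:W\to\C$. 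The same continuity argument used at the end of the proof of Proposition~\ref{prop:3}, combined with letting $b'\to b$, gives $g(b)=a$.

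Now comes the main step, a winding-number/argument-principle argument. Choose a small closed disk $\ov{W'}\subset W$ centered at $b$ so that $g(\partial W')$ is a Jordan curve enclosing $a$, and let $d=\on{dist}(a,g(\partial W'))>0$. For $n\in N_2$ large enough, uniform convergence on $\ov{W'}$ puts $\phi_n^{-1}\circ\psi_n(\partial W')$ within Hausdorff distance $d/2$ of $g(\partial W')$, in particular disjoint from $\ov{B}(a,d/2)$; univalence makes it a Jordan curve with $\phi_n^{-1}\circ\psi_n(W')$ as its bounded complementary component. Since $\phi_n^{-1}\circ\psi_n(b)\to a$, this bounded component contains $a$ for large $n\in N_2$, and then by connectedness and disjointness from its boundary it contains the whole ball $B(a,d/2)$. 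Because $\psi_n^{-1}\circ\phi_n$ is the inverse of $\phi_n^{-1}\circ\psi_n$ on this image, we deduce $B(a,d/2)\subset\on{Dom}(\psi_n^{-1}\circ\phi_n)$ for all $n$ large in $N_2$.

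To conclude, pick $\eta\in(0,d/2)$ small enough that $B(a,\eta)\subset V$. For large $n\in N_1$ we have $B(a,\eta)\subset V\subset\on{Dom}(\psi_n^{-1}\circ\phi_n)$ by definition of $N_1$, and for large $n\in N_2$ we have $B(a,\eta)\subset B(a,d/2)\subset\on{Dom}(\psi_n^{-1}\circ\phi_n)$ by the previous paragraph. Since $N_1\cup N_2$ is cofinite, $B(a,\eta)\subset\on{Dom}(\psi_n^{-1}\circ\phi_n)$ for every $n$ sufficiently large, and Proposition~\ref{prop:3} applies. The one nontrivial step is the winding argument turning uniform convergence of the boundary curves into containment of a \emph{uniform} disk around $a$ in the image; the rest is formal normal-family bookkeeping.
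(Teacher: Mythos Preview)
Your proof is correct and follows essentially the same route as the paper's: split into the two subsequences, use the univalent-map argument from the proof of Proposition~\ref{prop:3} to get convergence of $\phi_n^{-1}\circ\psi_n$ on $W$ along $N_2$ to an injective map sending $b$ to $a$, then observe that the range of $\phi_n^{-1}\circ\psi_n$ (which equals the domain of $\psi_n^{-1}\circ\phi_n$) therefore eventually contains a fixed ball about $a$, and combine with $N_1$ to feed Proposition~\ref{prop:3}. The paper compresses the Rouché/winding step into the single clause ``the range of $\phi_n^{-1}\circ\psi_n$ eventually contains a neighborhood of $a$''; you have simply made that step explicit.
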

\begin{proof} 
If one of the inclusion holds only for finitely many $n$, then we are done by the previous proposition. Otherwise, 
for those $n$ for which the second inclusion holds, the previous proposition implies that $\phi_n^{-1}\circ\psi_n$ converges on a neighborhood $W'\subset W$ of $b$, to an injective map sending $b$ to $a$. 
In particular, the range of $\phi_n^{-1}\circ\psi_n$ eventually contains a neighborhood of $a$. But this range is also equal to the domain of $\psi_n^{-1}\circ\phi_n$.
Hence the hypothesis of the previous proposition holds for \emph{all} $n$.
\end{proof}

The rest of the section aims at proving that the conditions of Proposition~\ref{prop:4} are satisfied.

\subsubsection{} Consider $r>0$ so that the closure of the disk $B(a,r)$ is contained in all $U_n$ with $n$ big enough, and similarly with $B(b,r)$ and $V_n$.
For such $n$, let $W_n=\phi_n(B(a,r))\cap\psi_n(B(b,r)) \subset \cal M_n$ and let $U'_n=\phi_n^{-1}(W_n)\subset B(a,r)$ and $V'_n=\psi_n^{-1}(W_n)\subset B(b,r)$. 
For $n$ big enough, let $A_n$ be the component of $U'_n$ that contains $a'$ and $B_n$ be the component of $V'_n$ that contains $b'$. A priori, these sets depend on $a'$ and $b'$.
Let $a'_n=a'$ or any sequence tending to $a'$ and $b'_n$ be the image of $a'_n$ by the change of variable $\psi_n^{-1}\circ\phi_n$. Then for $n$ big enough, $a'_n\in A_n$ and $b'_n\in B_n$.
The change of variable is a conformal bijection between $A_n$ and $B_n$ and extends continuously to their boundaries (recall that $B(a,r)$ and $B(b,r)$ are compactly contained in the domains of $\phi_n$ and $\psi_n$). It maps points on the boundary of $A_n$ that lie inside $B(a,r)$ to the boundary of $B(b,r)$ and similarly points on $B(b,r)\cap\partial B_n$ to $\partial B(a,r)$. See Figure~\ref{fig:sep}.

If $A_n$ is not simply connected for some $n$ then there is a component of $\partial A_n$ that lies inside $B(a,r)$ and it is mapped bijectively to $\partial B(b,r)$, hence all other points in $\partial A_n$ lie on $\partial B(a,r)$ and thus in fact $A_n$ is an annulus and one of its boundary component is $\partial B(a,r)$, $B_n$ is also an annulus with one boundary component being $\partial B(b,r)$. In this particular case, $\cal M_n$ is thus isomorphic to the Riemann sphere and $U'_n$ and $V'_n$ have only one component. See Figure~\ref{fig:sep-2}.

\subsubsection{} Let us first deal with the other case, when $A_n$ is simply connected. Let $r_0$ be given by Lemma~\ref{lem:1}. Choose any $r'_0<r_0$ Let $d'_0=d_\D(0,r'_0)$.
Recall that $a'$ and $b'$ are chosen after $\epsilon>0$, and belong to $B(a,\epsilon)$ and $B(b,\epsilon)$ respectively. Recall that $r>0$ is the radius of balls centered on $a$ and $b$ that are compactly contained in respectively $U_\infty$ and $V_\infty$.
Let us first fix $\epsilon=r'_0 \times r$ and then choose $a'$ and $b'$. Recall that $a_n\to a'$ and $b_n\tend b'$.
Because of the choices above, the $B(a,r)$-hyperbolic distance between $a$ and $a'$ is $<d_0$ hence the $B(a,r)$-hyperbolic ball of center $a'$ and radius $d_0'$ contains $a$. 
Similarly, the $B(b,r)$-hyperbolic ball of center $b'$ and radius $d_0'$ contains $b$. 

We claim that for all $n$ big enough, either $A_n$ contains $B_{B(a,r)}(a',d'_0)$ or $B_n$ contains $B_{B(b,r)}(b',d'_0)$. Otherwise 
by Lemma~\ref{lem:1} the harmonic measure within $A_n$ as seen from $a'_n$ of the part of the boundary of $A_n$ that lies inside $B(a,r)$ would be $>1/2$. And a similar thing could be said for $B_n$ and $b'_n$.
However, the change of variable being an isomorphism between $A_n$ and $B_n$ mapping $a'_n$ to $b'_n$, it preserves harmonic measure. We thus would get two disjoint sets with harmonic measure $>1/2$, which would be a contradiction.

Now recall that $B_{B(a,r)}(a', d'_0)$ contains $a$ and $B_{B(b,r)}(b', d'_0)$ contains $b$. Hence the hypotheses of Proposition~\ref{prop:4} are satisfied.

\begin{figure}
\begin{tikzpicture}
\node at (0,0) {\includegraphics[width=7cm]{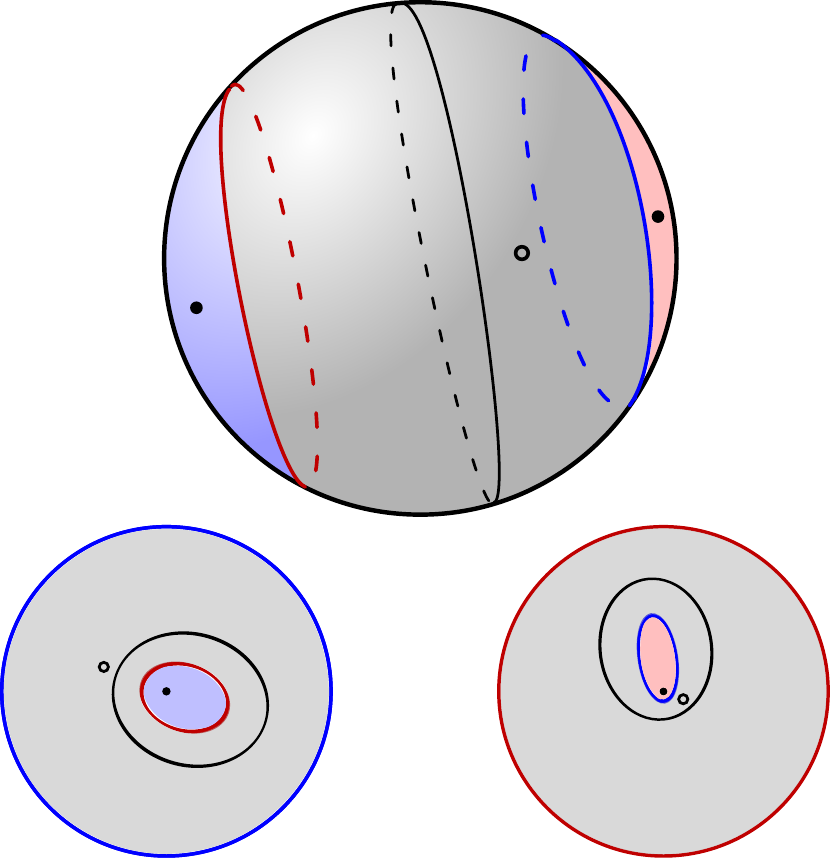}};
\node at (-2.4,2.9) {$\cal M_n$};
\node at (-1.9,-2.25) {$a$};
\node at (2.05,-1.95) {$b$};
\node at (-2.9,-2) {$a'_n$};
\node at (2.55,-2.45) {$b'_n$};
\node at (3,-0.6) {$B(b,r)$};
\node at (-3,-0.6) {$B(a,r)$};
\draw[->] (-1.4,-1) -- node[below right] {$\phi_n$} (-1.1,-0.5);
\draw[->] (1.4,-1) -- node[below left] {$\psi_n$} (1.1,-0.5);
\draw[-] (-0.9,-3.3) node[right]{$A_n$} -- (-1.8,-3.1);
\draw[-] (0.9,-3.3) node[left]{$B_n$} -- (1.9,-3.1);
\end{tikzpicture}
\caption{An annular case. Then $\cal M_n\simeq \wh\C$.}
\label{fig:sep-2}
\end{figure}

\subsubsection{} Let us now deal with the case when $A_n$ is not simply connected.
For convenience, let us denote $C_n=B(a,r)\setminus A_n$, and $D_n=B(b,r)\setminus B_n$.
Recall that $A_n$ and $B_n$ are annuli and that there is an isomorphism from $A_n$ to $B_n$ exchanging the outer and inner boundary curves, and sending $a'_n$ to $b'_n$.
The isomorphism must also map the core geodesic of $A_n$ to that of $B_n$.
Thus if $a'_n$ is exterior to this geodesic, then $b'_n$ is interior, and conversely.
Since the sequences $a_n$ and $b_n$ tend to limits $a'\neq a$ and $b'\neq b$, we deduce from Lemma~\ref{lem:3} that there is some $\epsilon>0$ such that for $n$ big enough, either $C_n \not\subset B(a,\epsilon r)$ or $D_n\not\subset B(b,\epsilon r)$. 

Now, similarly to the previous case, let us assume by contradiction that for infinitely many $n$, neither $A_n$ nor $B_n$ contains a fixed neighborhood of $a$ resp.\ $b$. So there are points in $C_n$ and $D_n$ tending to $a$ and $b$.
Since one of them also has point not in $B(a,\epsilon r)$ or $B(b,\epsilon r)$, the modulus of $A_n$ or $B_n$ cannot get too big. Since they are isomorphic, they have the same modulus. Therefore, there is in fact some $\epsilon'>0$ such that $C_n \not\subset B(a,\epsilon' r)$ \emph{and} $D_n\not\subset B(b,\epsilon' r)$.
By Corollary~\ref{cor}, for $n$ big enough the core curve of both annuli is eventually disjoint from $B(a \text{ or } b,\epsilon'')$ for some $\epsilon''>0$.
But, recall that there are other pairs $(a',b')$ such that $(a',U,\phi_n)\sim (b',V,\psi_n)$, and such that $a'$ and $b'$ are arbitrarily close to respectively $a$ and $b$. The corresponding sequences $a'_n$ and $b'_n$ would be both interior to the core curves, leading to a contradiction.

So $A_n$ eventually contains $B(a,\rho)$ for some $\rho>0$ or an analogous statement holds for $B_n$. We can thus conclude using Proposition~\ref{prop:4}.

\bigskip

This ends the proof of separation of $\frak M$ in the case of Riemann surfaces.

\bibliographystyle{alpha}

\begin{thebibliography}{MM}
\bibitem[C]{C} A. Chéritat. {\it Beltrami forms, affine surfaces and the Schwarz-Christoffel formula: a worked out example of straightening.} \texttt{arxiv:0811.2601}
\bibitem[T]{Thu} William P.\ Thurston. {\it Three-Dimensional Geometry and Topology, Vol.~1.} Ed.\ Silvio Levy. Princeton Mathematical Series (35). Princeton University Press. 1997
\end{thebibliography}

\end{document}